\newtheorem{theorem}{Theorem}[section]
\newtheorem{proposition}[theorem]{Proposition}
\numberwithin{equation}{section}
\DeclareRobustCommand{\qed}{%
  \ifmmode 
  \else \leavevmode\unskip\penalty9999 \hbox{}\nobreak\hfill
  \fi
  \quad\hbox{\qedsymbol}}
\newcommand{\openbox}{\leavevmode
  \hbox to.77778em{%
  \hfil\vrule
  \vbox to.675em{\hrule width.6em\vfil\hrule}%
  \vrule\hfil}}
\newcommand{\qedsymbol}{\openbox}
\newenvironment{proof}[1][\proofname]{\par
  \normalfont
  \topsep6\p@\@plus6\p@ \trivlist
  \item[\hskip\labelsep\itshape
    #1.]\ignorespaces
}{%
  \qed\endtrivlist
}
\newcommand{\proofname}{Proof}
\newcommand{\E}{\mathbb{E}}
\renewcommand{\P}{\mathbb{P}}
\providecommand{\href}[2]{#2}
\newcommand{\e}{{\rm e}}
\newcommand{\dd}[1]{{\rm d}#1}
\newcommand{\Rightarrowd}{\,{\buildrel d \over \Rightarrow}\,}
\newcommand{\equalD}{\,{\buildrel d \over =}\,}
\def \F {\Phi}
\def \f {\varphi}
\def \Pois {{\rm Pois}}
\def \l {\lambda}
\def \sl {{s_\lambda}}
\def \ee {{\rm e}}
\def \eps {\varepsilon}
\def \b {\beta}
\def \l {\lambda}
\def \g {\gamma}
\def\P{{\mathbb{P}}}
\definecolor{col1}{rgb}{0.8, 0.01568, 0.}
\definecolor{col2}{rgb}{0.9372549019607843, 0.62745, 0.1}
\definecolor{col3}{rgb}{0.9686274509803922, 0.87, 0.}
\definecolor{col4}{rgb}{0.3176470588235294, 0.58, 0.0784313}
\definecolor{col5}{rgb}{0.22745098, 0.2392156862,0.43}
\definecolor{col6}{rgb}{0.5019607843137255, 0.16, 0.4470}
\newcommand{\chp}[1]{}
\title{Economies-of-scale in many-server {queueing} systems:
tutorial and partial review of the QED Halfin-Whitt heavy-traffic regime}
\author{Johan S.H. van Leeuwaarden
\and
Britt W.J. Mathijsen
\and
Bert Zwart
}
\begin{document}
\maketitle

\begin{abstract}
Multi-server {queueing}  systems describe situations in which users require service from multiple parallel servers. Examples include check-in lines at airports, waiting rooms in hospitals, queues in contact centers, data buffers in wireless networks, and delayed service in cloud data centers. These are all situations with jobs (clients, patients, tasks) and servers (agents, beds, processors) that have large capacity levels, ranging from the order of tens (checkouts) to thousands (processors).
This survey investigates how to design such systems to exploit resource pooling and economies-of-scale.
In particular, we review the mathematics behind the Quality-and-Efficiency Driven (QED) regime, which lets the system operate close to full utilization, while the number of servers grows simultaneously large and delays remain manageable. 
{
Aimed at a broad audience, we describe in detail the mathematical concepts for the basic Markovian many-server system, and only provide sketches or references for more advanced settings related to e.g. load balancing, overdispersion, parameter uncertainty, general service
requirements and queueing networks. 
While serving as a partial survey of a massive body of work, the tutorial is not aimed to be exhaustive. }

\end{abstract}

\tableofcontents
\section{Introduction}
Multi-server systems describe situations in which users require service from multiple parallel servers.
Classical examples of such systems include call centers \cite{Erlang1917,Palm1957,Whitt1999,Gans2003,Borst2004,Brown2005,Zeltyn2005,Bassamboo2009,Khudyakov2006}, health care delivery \cite{Armony2015,Green2007,YomTov2010,Gupta2007}, and communication systems \cite{Anick1982,Kelly1985,johanthesis,Tan2012}.
In all settings, one can think of such systems as being composed of \textit{jobs} and \textit{servers}.
In call centers, jobs are customers' requests for help from one of the agents (servers).
In communication networks, the data packets are the jobs and the communication channels are the servers.
The system scale may refer to the size of the client base it caters to, or the magnitude of its capacity, or both.

Next to the central notions of jobs and servers, most multi-server systems are subject to uncertainty and hence give rise to stochastic systems.
Although arrival volumes over a certain planning horizon can be anticipated to some extent, for instance through historical data and forecasting methods, it is challenging to predict with certainty future arrival patterns.
Moreover, job sizes are typically random as well, adding more uncertainty.
This intrinsic stochastic variability is a predominant cause of delay experienced by jobs in the system, which is why stochastic models have proved instrumental in both quantifying and improving the operational performance of multi-server systems. Queueing theory provides the mathematical tools to analyze such stochastic models, and to evaluate and improve system performance. Queueing theory can also serve to reveal capacity-sizing rules that prescribe how to scale multi-server systems, in terms of matching capacity with demand, to meet certain performance targets. Often a trade-off exists between high system utilization and short delays.

\vspace{.2cm}
\noindent
\textbf{Effects of resource pooling.}
Let us first demonstrate the effects of resource pooling for the most basic multi-server queueing model, the $M/M/s$ queue. This model assumes that jobs arrive according to a Poisson process, that their service times form an i.i.d.~sequence of exponential random variables, and that jobs are processed in order of arrival by one of the $s$ parallel servers. Delayed jobs are temporarily stored in an infinite-sized buffer. The three parameters that characterize this model are: the arrival rate $\lambda$, the mean processing time $1/\mu$  and the number of servers $s$. We denote the number of jobs in the system at time $t$ by $Q(t)$. The process $(Q(t))_{t \geq 0}$ is a continuous-time Markov chain with state space $\{0, 1, 2,\ldots\}$. The birth rate $\lambda$ is constant and the death rate is $\mu\cdot\min\{k,s\}$ when there are $k$ jobs in the system.
 Observe now that we can change the time scale by considering the process $(Q(t\mu ))_{t \geq 0}$, so that a busy server completes one job per unit of time. This allows us to consider the case $\mu=1$ without loss of
 generality.

To illustrate the operational benefits of sharing resources, we compare a system of $s$ separate $M/M/1$ queues, each serving a Poisson arrival stream with rate $\lambda<1$, against one $M/M/s$ queue with arrival rate $\lambda s$.
The two systems thus face the same workload $\lambda$ per server.
We now fix the value of $\lambda$ and vary $s$.
Obviously, the delay and queue length distribution in the first scenario with parallel servers are unaffected by the parameter $s$, since there is no interaction between the single-server queues.
This lack of coordination tolerates an event of having an idle server, while the total number of jobs in the system exceeds $s$, therefore wasting resource capacity. Such an event cannot happen in the many-server scenario, due to the central queue.
This central coordination improves the Quality-of-Service (QoS).
Indeed Figure \ref{fig:waiting_time_pooling} shows that the reduction in mean delay and delay probability can be substantial. \\

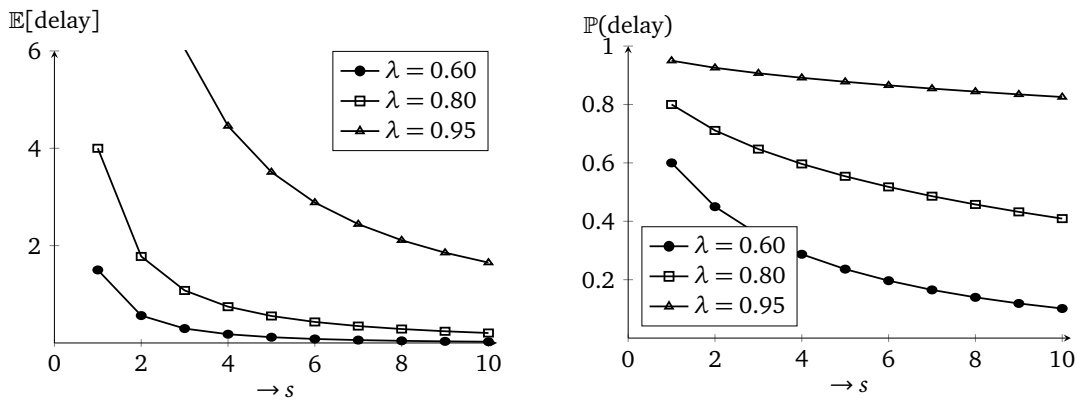
\begin{figure}[t]
\centering
\begin{subfigure}{0.48\textwidth}
\centering
\begin{tikzpicture}[scale=0.85]
\begin{axis}[
	xmin = 0, 
	xmax = 10.2,
	ymin = 0,
	ymax = 6,
	axis line style={->},
	axis x line = left,
	axis y line=middle,
	xlabel = {$\to s$},
	ylabel = {$\E[{\rm delay}]$},
	y label style = {at = {(axis cs: 0,6.2)},anchor = south},
	xscale=1,
	yscale=0.8,
	legend style = {at = {(axis cs: 10,6)},anchor = north east}]
	
\addplot[thick,mark=*] table{
1	1.5
2	0.5625
3	0.29562
4	0.179402
5	0.118076
6	0.0819026
7	0.0589477
8	0.0436069
9	0.0329511
10	0.0253248
};
\addplot[thick,mark=square] table {
1	4.
2	1.77778
3	1.07865
4	0.745541
5	0.554113
6	0.431477
7	0.347098
8	0.286028
9	0.240123
10	0.20459
};

\addplot[thick,mark=triangle] table {
1	19.
2	9.25641
3	6.04672
4	4.45709
5	3.5112
6	2.8853
7	2.44129
8	2.11042
9	1.85463
10	1.65117
};

\legend{{ $\lambda=0.60$},{$\lambda = 0.80$},{$\lambda = 0.95$}}
\end{axis}
\end{tikzpicture}
\end{subfigure}
\hspace{2mm}
\begin{subfigure}{0.48\textwidth}
\begin{tikzpicture}[scale=0.85]
\begin{axis}[
	xmin = 0, 
	xmax = 10.2,
	ymin = 0,
	ymax = 1.0,
	axis line style={->},
	axis x line = left,
	axis y line=middle,
	xlabel = { $\to s$},
	ylabel = { $\P({\rm delay})$},
	y label style = {at = {(axis cs: 0,1)},anchor = south},
	xscale=1,
	yscale=0.8,
	legend style={at={(0.03,0.05)},anchor=south west}]
	
\addplot[thick,mark=*] table {
1	0.6
2	0.45
3	0.354745
4	0.287043
5	0.236152
6	0.196566
7	0.165054
8	0.139542
9	0.118624
10	0.101299
};
\addplot[thick, mark=square] table {
1	0.8
2	0.711111
3	0.647191
4	0.596432
5	0.554113
6	0.517772
7	0.485938
8	0.457645
9	0.432222
10	0.40918
};
\addplot[thick, mark=triangle] table {
1	0.95
2	0.925641
3	0.907009
4	0.891419
5	0.877799
6	0.865589
7	0.854452
8	0.844169
9	0.834584
10	0.825586
};

\legend{{$\lambda=0.60$},{$\lambda = 0.80$},{$\lambda = 0.95$}}
\end{axis}

\end{tikzpicture}
\end{subfigure}
\caption{Effects of resource pooling in the $M/M/s$ queue.}
\label{fig:waiting_time_pooling}
\end{figure}

\vspace{.2cm}
 \noindent
\textbf{QED regime.}
The Quality-and-Efficiency driven (QED) regime is a form of resource pooling that goes beyond the typical objective of improving performance by joining forces. For the $M/M/s$ queue, the QED regime is best explained in terms of the square-root rule
\begin{equation}\label{beta}
s=\lambda+\beta\sqrt{\lambda}, \quad \beta>0,
\end{equation}
 which prescribes how to size capacity as a function of the offered load. Notice that the number of servers $s$ is taken equal to the sum of the mean load $\lambda$ and an additional term $\beta\sqrt{\lambda}$ that is of the same order as the natural load fluctuations of the arrival process (so of the order $\sqrt{\lambda}$). Observe that capacity increases with $\beta$, where we note that the free parameter $\beta$ can take any positive value. The QED regime assumes the coupling between $\lambda$ and $s$ as in \eqref{beta} and then lets both $s$ and $\lambda$ become large.
This not only increases the scale of operation, but also lets the load per server $\rho = \l/s \sim 1-\beta / \sqrt\l$ approach 1 as $s$ (and $\l$) become(s) large.
Now instead of diving immediately into the mathematical details, we shall first demonstrate the QED regime, or the capacity-sizing rule \eqref{beta}, by investigating typical sample paths of the queue length process $Q = (Q(t))_{t\geq 0}$  for increasing values of $\lambda$.

The upper middle panel of Figure \ref{fig:sample_paths_lambda10} depicts a sample path for $\lambda = 10$ and $s$ set according to \eqref{beta} rounded to the nearest integer.
The number of delayed jobs at time $t$ is given by $(Q(t)-s)^+$ with $(\cdot)^+ := \max\{0,\cdot\}$.
The number of idle servers is given by $(s-Q(t))^+$.
In Figure \ref{fig:sample_paths_lambda10}, the upper and lower area, enclosed between the sample path and the horizontal dashed line $s$, hence represent the cumulative queue length and cumulative number of idle servers, respectively, over the given time period.
Bearing in mind the dual goal of QoS and efficiency, we want to minimize both of these areas simultaneously.

We next show similar sample paths for increasing values of $\lambda$.
Since $s > \lambda$ is required for stability, the value of $s$ needs to be adjusted accordingly.
We show three scaling rules
\begin{equation}
\label{eq:intro_three_scaling_rules}
s^{(1)}_\lambda = \left[ \lambda + \beta \right ], \qquad
s^{(2)}_\lambda = [ \lambda + \beta\sqrt{\lambda} ], \qquad
s^{(3)}_\lambda = \left[ \lambda + \beta\,\lambda \right],
\end{equation}
with $\beta>0$, where $[\cdot]$ denotes the rounding operator.
Note that these three rules differ in terms of overcapacity $\sl-\lambda$, and $s_\lambda^{(2)}$ is the (rounded) square-root rule introduced in \eqref{beta}.
\begin{figure}[t]
\centering

\begin{tabular}{cccc} 
\rotatebox{90}{~~~~~~~~~~~~~~~~$\lambda=10$}
&
\includegraphics[scale=1.2]{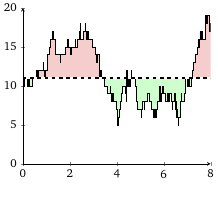}
&
\includegraphics[scale=1.2]{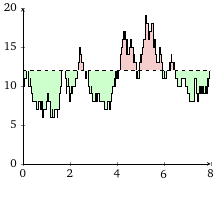}
&
\includegraphics[scale=1.2]{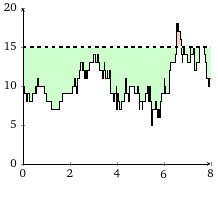}
\\
\rotatebox{90}{~~~~~~~~~~~~~~~~$\lambda=50$}
&
\includegraphics[scale=1.2]{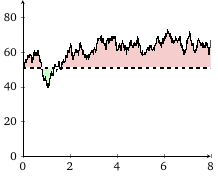}
&
\includegraphics[scale=1.2]{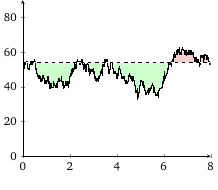}
&
\includegraphics[scale=1.2]{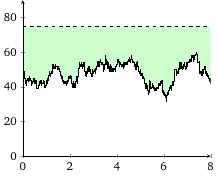}
\\
\rotatebox{90}{~~~~~~~~~~~~~~~~$\lambda=100$}
 &
\includegraphics[scale=1.2]{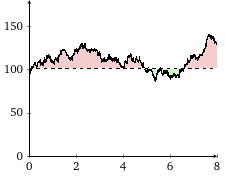}
&
\includegraphics[scale=1.2]{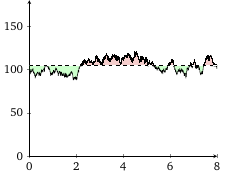}
&
\includegraphics[scale=1.2]{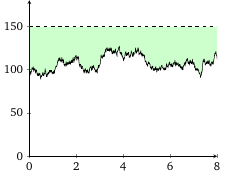}
\\
 &  \ \ $\sl^{(1)}$ & \ \ $\sl^{(2)}$ & \ \ $\sl^{3)}$  
 \end{tabular}

\caption{Sample paths of the $M/M/s$ queue with $\lambda = 10,\,50$ and $100$ and $s$ set according to the three scaling rules in \eqref{eq:intro_three_scaling_rules} with $\beta=0.5$.}
\label{fig:sample_paths_lambda10}
\end{figure}
Figure \ref{fig:sample_paths_lambda10} depicts typical sample paths of the queue length process for increasing values of $\lambda$ for the three scaling rules with $\beta = 0.5$.
Observe that for all scaling rules, the stochastic fluctuations of the queue length processes relative to $s$ decrease with the system size.
Moreover, the paths in Figure \ref{fig:sample_paths_lambda10} appear to become smoother with increasing $\l$.
Of course, the actual sample path always consists of upward and downward jumps of size 1, but we will show how proper centering and scaling of the queue length process indeed gives rise to a \textit{diffusion process} in the limit as $\lambda\to\infty$ (Section \ref{sec:three_examples}).
Although the difference in performance of the three regimes is not yet evident for relatively small $\lambda$, clear distinctive behavior occurs for large $\lambda$.

\vspace{.2cm}
 \noindent
\textbf{ED and QD regimes.}
With $s^{(1)}_\lambda$, most jobs are delayed and server idle time is low, since $\rho = (1+\beta/\lambda)^{-1} \to 1$ as $\lambda \to \infty$.
Systems scaled according to this rule value server efficiency over QoS and  therefore this regime is in the literature also known as the \textit{Efficiency-Driven} (ED) regime \cite{Zeltyn2005}.
In contrast, the third scaling rule $s^{(3)}_\l$ yields a constant utilization level $\rho = 1/(1+\beta)$, which stays away from 1, even for large $\lambda$.
Queues operating in this regime exhibit significant server idle times.
Moreover, for the particular realization of the queueing processes for $\lambda = 50$ and $\lambda=100$, none of the jobs is delayed.
This is known as the \textit{Quality-Driven} (QD) regime \cite{Zeltyn2005}.
The scaling rule $s^{(2)}_\l$ is in some ways a combination of the other two regimes.
First, we have $\rho = (1 +\beta/\sqrt{\lambda})^{-1} \to 1$ as $\lambda \to \infty$, which indicates efficient usage of resources as the system grows.
The sample paths, however, indicate that only a fraction of all jobs is delayed, and only small queues arise, indications of good QoS.
Figure \ref{fig:sample_paths_lambda10} provides visual confirmation that the square-root rule $s^{(2)}_\l$, related to the QED regime, strikes the right balance between the two profound objectives of capacity allocation in multi-server systems: negligible delay and idling.
We shall latter discuss the mathematical foundations of the QED regime and quantify the favorable properties revealed by Figure \ref{fig:sample_paths_lambda10}, including the non-degeneracy of the delay probability. To quote Halfin \& Whitt \cite{Halfin1981}: ``The balance between service and economy usually dictates that the probability of delay be kept away from both zero and one, so that the number of jobs present fluctuates between the regions above and below the number of servers.''



\vspace{.2cm}
\noindent
\textbf{Central Limit Theorem.}
We will see that not only the $M/M/s$ queue, but a wide range of multi-server models will possess the same property (delay probability being strictly between zero and one). This is because the QED regime is intimately connected with the Central Limit Theorem. Let  $\F$ denote the cumulative distribution function (cdf) of the standard normal distribution.
\begin{theorem}[Central Limit Theorem]\label{thm:CLT}
Let $X_1,X_2,\ldots$ be i.i.d.~random variables of finite mean $m$ and variance $v$. Then, for all $x\in\mathbb{R}$,
\begin{equation}
\lim_{n\to\infty}\P\Big(\sum_{i=1}^n X_i <mn +v x \sqrt{n}\Big)=\Phi(x).
\end{equation}
\end{theorem}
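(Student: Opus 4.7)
My plan is to use the classical method of characteristic functions, which, in the i.i.d.\ setting, works under only the two-moment assumption given. First I would standardize: set $Y_i=(X_i-m)/\sqrt{v}$ so that $\E[Y_i]=0$, $\Var{Y_i}=1$, and let $S_n^{\ast}=n^{-1/2}\sum_{i=1}^n Y_i$. The claim then reduces to $S_n^{\ast}\limdist Z$ with $Z\sim\mathcal{N}(0,1)$; the stated scaling is recovered by writing $\sum_i X_i<mn+\sqrt{v}\,x\sqrt{n}$ as $S_n^{\ast}<x$ (reading the coefficient of $x\sqrt{n}$ as the standard deviation).

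\textbf{Characteristic function computation.} Let $\varphi(t)=\E[\e^{itY_1}]$. By independence, $\E[\e^{itS_n^{\ast}}]=\varphi(t/\sqrt{n})^n$. Under the finite-variance hypothesis one has the Taylor-like expansion
\begin{equation}
\varphi(u)=1-\tfrac12 u^2+o(u^2)\qquad (u\to 0),
\end{equation}
which I would obtain by applying dominated convergence to the integrand $(\e^{iuY_1}-1-iuY_1)/u^2\to -\tfrac12 Y_1^2$, using the integrable majorant $Y_1^2$ (recall $|\e^{iuy}-1-iuy|\le \tfrac12 u^2 y^2$). Substituting $u=t/\sqrt{n}$ yields $\varphi(t/\sqrt{n})=1-t^2/(2n)+o(1/n)$, whence
\begin{equation}
\varphi(t/\sqrt{n})^n\to \e^{-t^2/2}\qquad (n\to\infty),
\end{equation}
pointwise in $t$, the right-hand side being the characteristic function of the standard normal.

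\textbf{Conclusion and main obstacle.} Invoking L\'evy's continuity theorem, pointwise convergence of characteristic functions to a limit that is continuous at $0$ gives $S_n^{\ast}\limdist Z$, and since $\Phi$ is continuous, convergence of the cdfs then holds at every $x\in\R$. The delicate step—and the main obstacle—is the expansion of $\varphi$ under only two finite moments: a naive differentiation-under-the-integral-sign argument would demand a finite third moment to control the remainder. The dominated-convergence approach above is exactly what is required to push through with just $v<\infty$, and in this sense the hypothesis is used in the most economical way. A minor additional subtlety is the passage from $\varphi(t/\sqrt{n})^n$ to $\e^{-t^2/2}$: for any fixed $t$, $\varphi(t/\sqrt{n})$ eventually lies in a neighborhood of $1$ on which the principal branch of the complex logarithm is defined, so $n\log\varphi(t/\sqrt{n})\to -t^2/2$ follows at once from the expansion, and exponentiating gives the limit.
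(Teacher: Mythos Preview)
Your characteristic-function proof via L\'evy's continuity theorem is correct and is the standard textbook argument. The paper, being a survey, does not actually prove this theorem: it states the CLT as classical background and merely remarks that proofs ``are typically not based on exact expressions and precise calculations'' like the Stirling-based computations done earlier for the binomial and Poisson cases. There is therefore no paper proof to compare against. You were also right to flag the coefficient issue---as written, the $v$ multiplying $x\sqrt{n}$ should be the standard deviation $\sqrt{v}$, and your standardization $Y_i=(X_i-m)/\sqrt{v}$ implicitly corrects this.
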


Consider a $\Pois(\l)$ random variable, with $\l$ integer-valued, which is equal in distribution to the sum of $\l$ independent Poisson random variables with unit mean and variance, i.e.
\[ \Pois(\l) \equalD \sum_{i=1}^\l \Pois_i(1). \]
Direct application of the CLT hence implies that for $s=\l+\beta\sqrt{\l}, $
\begin{equation}\label{eq:first_clt_poisson}
\P( \Pois(\l) \leq s) \to \Phi(\beta), \qquad \text{as }\l\to\infty.
\end{equation}


\vspace{.2cm}
\noindent
\textbf{Related surveys.}
In  this survey, we review the analysis of many-server systems operating in the QED regime, with special focus on various modeling assumptions that match well with the CLT.
In recent years, several comprehensible surveys have appeared in the literature on topics related to queueing systems and their asymptotic analysis.
We take the opportunity to mention a couple of them here.
Some tutorial papers are devoted to specific applications. Telephone call centers are the main focus of survey papers by Gans et al.~\cite{Gans2003}, Brown et al.~\cite{Brown2005} and Aksin et al.~\cite{Aksin2007}.
Armony et al.~\cite{Armony2015} provide an extensive overview of queueing phenomena in health care environments.
Focussing more on methodology, Pang et al.~\cite{Pang2007} discuss mathematical techniques to prove stochastic-process limits for queueing systems, and
Ward \cite{Ward2012} reviews queueing systems with abandonments in asymptotic regimes (including the QED regime). The survey paper by Dai and He~\cite{Dai2012} also concerns queueing systems  with abandonments, particularly focusing on the ED and QED regimes. {Whitt~\cite{Whitt2017} provides an extensive bibliography of the literature on queueing models with time-varying demand, also covering the QED regime.}

\vspace{.2cm}
\noindent\textbf{Organization.}
Section \ref{sec:three_examples} introduces two classical queueing models that serve as a vehicle to convey the ideas behind the QED regime.
 We discuss in Section~\ref{sec:properties} key properties that are common to these models under QED scaling, and illustrate how these features stretch beyond these specific model settings.
In Section \ref{sec:dim} we explain how asymptotic QED approximations of performance measures can be transformed into easy-to-use and robust capacity allocation principles.
Furthermore, we illustrate how to adapt capacity allocation decisions to time-varying demand.
Even though QED stochastic-process limits provide good first-order insight into the performance of large-scale systems, care needs to be taken with regard to the finite-ness of the system.
Therefore, we review in Section \ref{sec:refinements}  results that attempt to quantify the error made by asymptotic approximations, leading to both refinements and approximation bounds.
We also consider the implication of approximation errors for capacity allocation decisions (so-called optimality gaps).
Finally, in Section \ref{extsec} we review some model extensions  have received much attention due practical applicability or theoretical challenges.

\vspace{.2cm}
\noindent\textbf{Notation.}
We conclude this section by introducing some notation that will be used throughout the paper.
By $N(\mu,\sigma^2)$ we denote a normally distributed random variable with mean $\mu$ and variance $\sigma^2$.
The probability density function (pdf) and cumulative distribution function (cdf) of the standard normal distribution are denoted by $\f$ and $\F$, respectively.
The symbol $\equalD$ means equal in distribution and $\Rightarrowd$ means convergence in distribution.
The relation  $u(\l)\sim v(\l)$ implies that $\lim_{\l\to\infty} u(\l)/v(\l) = 1$.
By $u(\l) = O(v(\l))$ we mean that $\limsup_{\l\to\infty} u(\l)/v(\l)< \infty$, and $u(\l) = o(v(\l))$ implies that $\limsup_{\l\to\infty} u(\l)/v(\l) = 0$.

\section{Example models} \label{sec:three_examples}
This survey uses two running examples that are illustrative for both the model-specific and universal features of the QED regime. The first example is the already introduced $M/M/s$ queue, a fully Markovian many-server system. The second example is the so-called bulk-service queue, a standard discrete-time model. Through these models, we shall describe in this section several easy ways of establishing QED limits that only require a standard application of the CLT.

\subsection{Many exponential servers}\label{ss:ex1}

Let us first consider an infinite-server system to which jobs arrive according to a Poisson process with rate $\lambda$. Each jobs requires an exponentially distributed service time with unit mean. The steady-state number of jobs presents (or equivalently the steady-state number of busy servers) follows a Poisson distribution with mean $\lambda$. It is known that a Poisson distribution can be well approximated by a normal distribution for sufficiently large $\lambda$, so that it is approximately normally distributed with mean and variance $\lambda$. Therefore, the coefficient of variation (standard deviation divided by the mean) decreases as $1/\sqrt{\lambda}$, which makes the steady-state queue length become more concentrated around its mean with increasing $\lambda$.

If we now pretend, for a moment, that this infinite-server system serves as a good approximation for the $M/M/s$ queue, we could approximate the steady-state delay probability $\mathbb{P}({\rm delay})$ in the $M/M/s$ queue as
\begin{equation}\label{appis}
\mathbb{P}({\rm delay})\approx \mathbb{P}(Q\geq s)= \mathbb{P}\left(\frac{Q-\lambda}{\sqrt{\lambda}}\geq \frac{s-\lambda}{\sqrt{\lambda}}\right)
\approx 1-\Phi\left(\frac{s-\lambda}{\sqrt{\lambda}}\right)=1-\Phi(\beta).
\end{equation}
The use of this normal approximation in support of capacity allocation decisions was explored by Kolesar \& Green \cite{Kolesar1998}.
Of course, the infinite-server system ignores the one thing that makes a queueing system unique: that a queue is formed when all servers are busy. During these periods of congestion, a system with a finite number of servers $s$ will operate at a slower pace than its infinite-server counterpart, so the approximation in \eqref{appis} is likely to underestimate $\mathbb{P}({\rm delay})$. Nevertheless, the infinite-server heuristic does suggest that, in large systems, the number of servers can be chosen close to the offered load as in \eqref{beta}.

We shall now make more precise statements about QED limits, and use the intimate relation between the $M/M/s/s$ queue (Erlang loss model) and the $M/M/s$ queue (Erlang delay model).
When $\rho=\lambda/s<1$ the steady-state distribution of the $M/M/s$ queue exists and is given by
\begin{equation}
\label{eq:MMs_stationary_distribution}
\pi_k = \lim_{t\to\infty} \P( Q(t) = k )
= \left\{
\begin{array}{ll}
\pi_0\frac{\lambda^k}{k!}, & \text{if } k\leq s, \\
\pi_0\frac{\lambda^s}{s!}\,\rho^{k-s} & \text{if } k > s,
\end{array}
\right.
\end{equation}
where
\begin{equation*}
\pi_0 = \left( \sum_{k=0}^s \frac{\lambda^k}{k!} + \frac{\rho}{1-\rho} \frac{\lambda^s}{s!}\right)^{-1}.
\end{equation*}
From Little's law and the PASTA (Poisson Arrivals See Time Averages) property \cite{Wolff1982}, it follows that the delay probability, so the probability that an arbitrary job needs to wait before taken into service, is given by the Erlang C formula
\begin{equation}
\label{eq:MMs_wait}
C(s,\lambda) = \frac{\lambda^s}{s!} \left( (1-\rho) \sum_{k=0}^{s-1} \frac{\l^k}{k!} + \frac{\lambda^s}{s!} \right)^{-1}.
\end{equation}
The mean steady-state delay is given by
\begin{equation}
\label{eq:MMs_meanwait}
\E[{\rm delay}] = \frac{C(s,\lambda)}{(1-\rho)s}.
\end{equation}
A closely related performance measure is the probability of blocking in the $M/M/s/s$ queue, also known as the Erlang loss formula, and is given by
\begin{equation}
B(s,\lambda) = \frac{\frac{\lambda^s}{s!}}{\sum_{k=0}^{s} \frac{\l^k}{k!}} =\frac{\P(\Pois (\l) = s)}{ \P(\Pois(\lambda) < s) },
\end{equation}
where the latter probabilistic representation, with Pois$(\l)$ denoting a Poisson random variable with mean $\l$, is convenient in light of the CLT.
Note also that the Erlang B and C formulae are related by
\begin{equation}
\label{eq:proof_HW_0}
C(s,\lambda) = \left( \rho + \frac{1-\rho}{B(s,\lambda)} \right) ^{-1}.
\end{equation}
See \cite{WhittErlangBCoverview} for an extensive overview of properties of the Erlang B and C formulae; see also \cite{J74,Janssen2008b}. We now focus on how these formulae scale when $\lambda$ and $s$ both grow large.


Halfin \& Whitt \cite{Halfin1981} showed that, just as the tail probability in the infinite-server setting \eqref{appis}, the delay probability in the $M/M/s$ queue converges under scaling \eqref{beta} to a value between 0 and 1.
Moreover, they showed that this is in fact the only scaling regime in which such a non-degenerate limit exists and identified its value.

Let $\rho_\l := \l/\sl$ denote the the server utilization if capacity $\sl$ is scaled according to \eqref{beta}.
The following result is obtained in \cite{Halfin1981}.
\begin{proposition}
\label{prop:HalfinWhitt_delay_probability}
There is the  non-degenerate limit
\begin{equation}
\label{eq:HW_delay_prob}
\lim_{\lambda\to\infty} C(s_\lambda,\lambda ) = \left( 1+ \frac{\beta\,\F(\beta)}{\f(\beta)} \right)^{-1} =: g(\beta) \in (0,1)
\end{equation}
if and only if
\begin{equation}
\label{eq:HalfinWhitt_scaling}
\lim_{\lambda\to\infty} (1-\rho_\l) \sqrt{s_\lambda} \to \beta, \quad \beta > 0.
\end{equation}
In this case
\begin{equation}
\label{eq:HW_loss_prob}
\lim_{\lambda\to\infty} \sqrt{\lambda}\, B(s_\lambda,\lambda ) = \frac{\f(\beta)}{\F(\beta)}.
\end{equation}

\end{proposition}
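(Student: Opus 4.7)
The plan is to reduce the whole statement to asymptotics of Poisson probabilities via the algebraic identity \eqref{eq:proof_HW_0}, which I would rewrite under the scaling as
\begin{equation*}
C(s_\lambda,\lambda) = \left( \rho_\lambda + \frac{1-\rho_\lambda}{B(s_\lambda,\lambda)} \right)^{-1}.
\end{equation*}
Since $\rho_\lambda \to 1$ and $(1-\rho_\lambda)\sqrt{\lambda} \sim (1-\rho_\lambda)\sqrt{s_\lambda} \to \beta$, it suffices to pin down the precise rate at which $B(s_\lambda,\lambda)$ vanishes, and this is most cleanly done through the probabilistic representation
\begin{equation*}
B(s_\lambda,\lambda) = \frac{\P(\Pois(\lambda) = s_\lambda)}{\P(\Pois(\lambda) < s_\lambda)}.
\end{equation*}
So the plan splits $B$ into a point mass in the numerator and a tail in the denominator, and handles each by a CLT-type argument.

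For the denominator, a direct application of \eqref{eq:first_clt_poisson} gives $\P(\Pois(\lambda) < s_\lambda) \to \Phi(\beta)$. For the numerator, I would recycle precisely the Stirling-based computation already carried out in \eqref{clt2}: evaluating the Gaussian approximation of the Poisson pmf at $x = s_\lambda = \lambda + \beta\sqrt{\lambda}$ yields
\begin{equation*}
\sqrt{\lambda}\,\P(\Pois(\lambda) = s_\lambda) \to \varphi(\beta).
\end{equation*}
Taking the quotient gives \eqref{eq:HW_loss_prob}. Substituting this back into the identity, together with $(1-\rho_\lambda)\sqrt{\lambda} \to \beta$ and $\rho_\lambda \to 1$, yields
\begin{equation*}
\lim_{\lambda\to\infty} C(s_\lambda,\lambda) = \left( 1 + \frac{\beta\,\Phi(\beta)}{\varphi(\beta)} \right)^{-1} = g(\beta),
\end{equation*}
which proves the ``if'' direction and \eqref{eq:HW_loss_prob}.

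For the ``only if'' direction, I would note that $g$ is continuous and strictly decreasing on $(0,\infty)$ with $g(0^+) = 1$ and $g(\infty) = 0$, and then rerun the Poisson asymptotics above along a generic scaling $(s_\lambda,\lambda)$. Setting $\beta_\lambda := (1-\rho_\lambda)\sqrt{s_\lambda}$ and passing to an arbitrary convergent subsequence $\beta_\lambda \to \beta^* \in [0,\infty]$, the same local and integral CLT deliver $C(s_\lambda,\lambda) \to g(\beta^*)$ with the conventions $g(0):=1$, $g(\infty):=0$. A non-degenerate limit $C(s_\lambda,\lambda) \to c \in (0,1)$ therefore forces $\beta_\lambda$ to converge to the unique $\beta^* = g^{-1}(c) \in (0,\infty)$, which is precisely the scaling \eqref{eq:HalfinWhitt_scaling}.

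The main obstacle is the uniformity needed in this second half: the pointwise local CLT $\sqrt{\lambda}\,\P(\Pois(\lambda)=s_\lambda) \to \varphi(\beta)$ has to be upgraded to hold along sequences in which $\beta_\lambda$ itself drifts, and with the correct degenerate behaviour in the boundary regimes $\beta_\lambda \to 0$ and $\beta_\lambda \to \infty$ (where the ratio $\varphi(\beta_\lambda)/\Phi(\beta_\lambda)$ behaves quite differently in the two limits). Establishing a local CLT for Poisson that is uniform on $[0,\infty]$, e.g.\ via a refined Stirling expansion with an explicit remainder, is the technically delicate step; once it is in hand, the subsequence argument closes the equivalence and both \eqref{eq:HW_delay_prob} and \eqref{eq:HW_loss_prob} follow simultaneously.
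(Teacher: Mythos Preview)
Your proof of the ``if'' direction is essentially identical to the paper's: both routes invoke the algebraic identity \eqref{eq:proof_HW_0}, handle the denominator $\P(\Pois(\lambda)<s_\lambda)$ by the CLT \eqref{eq:first_clt_poisson}, and handle the numerator $\P(\Pois(\lambda)=s_\lambda)$ via Stirling's formula. The only cosmetic difference is that the paper packages the numerator computation as $\P(\Pois(\lambda)=s_\lambda)/(1-\rho_\lambda)\to\varphi(\beta)/\beta$ directly, whereas you first establish $\sqrt{\lambda}\,\P(\Pois(\lambda)=s_\lambda)\to\varphi(\beta)$ and then divide; these are the same computation.

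Where you go beyond the paper is the ``only if'' direction: the paper's proof, as written, only treats sufficiency and is silent on necessity. Your subsequence argument (pass to a convergent subsequence of $\beta_\lambda\in[0,\infty]$, identify the limit as $g(\beta^*)$, use strict monotonicity of $g$ to force $\beta^*=g^{-1}(c)$) is the standard route, and your identification of the uniform local CLT as the technical sticking point is accurate. This is exactly how the original Halfin--Whitt paper closes the equivalence, so your proposal is in fact more complete than the survey's presentation.
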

\begin{proof}
%
%
Similar to \eqref{appis} we find
\begin{align}
\P(\Pois(\l) < \sl)
&= \P\left(\frac{\Pois(\l)-\l}{\sqrt{\l}} < \frac{\sl-\l}{\sqrt{\l}}\right)
= \P\left(\frac{\Pois(\l)-\l}{\sqrt{\l}} < (1-\rho_\l)\,\frac{\sl}{\sqrt\l}\right)\nonumber\\
&= \P\left(\frac{\Pois(\l)-\l}{\sqrt{\l}} < (1-\rho_\l)\,\sqrt{\sl}\left(1+o(1)\right) \right) \to \F(\beta),
\label{eq:proof_HW_1}
\end{align}
for $\l\to\infty$.
Stirling's formula gives
\begin{align}
\P(\Pois(\l)=s) &= {\rm e}^{-\l}\frac{\l^{\sl}}{\sl!}
\sim {\rm e}^{-\l} \l^{\sl}\cdot \frac{1}{\sqrt{2\pi\,\sl}} \left(\frac{\rm e}{\sl}\right)^{\sl} = \frac{1}{\sqrt{2\pi\sl}}\,\ee^{\sl-\l - \sl{\,\rm ln}(\rho_\l)}.
\end{align}
Since ${\,\rm ln}(\rho_\l) = -(1-\rho_\l) - \tfrac{1}{2}(1-\rho_\l)^2 + o((1-\rho_\l)^2)$ we find that
\begin{equation}
\label{eq:proof_HW_2}
\frac{ \P(\Pois(\l) = \sl) }{ 1-\rho_\l }
= \frac{1}{(1-\rho_\l)\sqrt{\sl}} \, \frac{\ee^{ -\tfrac{1}{2}(1-\rho_\l)^2\sl + o\left((1-\rho_\l)^2\sl\right)}}{\sqrt{2\pi}}  \to \frac{1}{\beta}\, \frac{\ee^{{-}\tfrac{1}{2} \beta^2}}{\sqrt{2\pi}} = \frac{\f(\beta)}{\beta}.
\end{equation}
Substituting \eqref{eq:proof_HW_1} and \eqref{eq:proof_HW_2} into \eqref{eq:proof_HW_0} gives \eqref{eq:HW_delay_prob}, and as by-product also \eqref{eq:HW_loss_prob}.
\end{proof}

\begin{figure}
\centering
\begin{tikzpicture}[scale=0.8]
\begin{axis}[
	xmin = 0, 
	xmax = 40,
	ymin = 0,
	ymax = 1.05,
	axis line style={->},
	axis lines = left,
	xlabel = $\to \l$,
	ylabel = {$C(\sl,\l)$},
	xscale=1,
	yscale=0.8]
\addplot[very thick] table
{
0.904875	0.904875
1.86349	0.898824
2.83172	0.895881
3.80494	0.894042
4.78134	0.892747
5.76	0.89177
6.74038	0.890997
7.72211	0.890366
8.70496	0.889838
9.68873	0.889386
10.6733	0.888995
11.6586	0.888651
12.6444	0.888346
13.6308	0.888073
14.6177	0.887826
15.605	0.887602
16.5927	0.887397
17.5807	0.887209
18.5691	0.887035
19.5578	0.886874
20.5467	0.886724
21.5359	0.886584
22.5254	0.886453
23.5151	0.88633
24.505	0.886214
25.4951	0.886105
26.4854	0.886002
27.4758	0.885904
28.4665	0.885811
29.4573	0.885722
30.4482	0.885638
31.4393	0.885558
32.4305	0.885481
33.4219	0.885407
34.4134	0.885337
35.405	0.885269
36.3967	0.885205
37.3885	0.885142
38.3805	0.885082
39.3725	0.885024
40.3647	0.884969
41.3569	0.884915
42.3492	0.884863
43.3417	0.884813
44.3342	0.884764
45.3267	0.884717
46.3194	0.884671
47.3122	0.884627
48.305	0.884584
49.2979	0.884543
};
\addplot[dashed] coordinates{	(0,0.880287) (40,0.880287) } ;
\addplot[very thick] table {
0.609612	0.609612
1.40693	0.581007
2.25	0.567757
3.11722	0.55969
4.	0.554113
4.89389	0.549958
5.79623	0.546707
6.70527	0.544073
7.6198	0.541882
8.53893	0.540021
9.46198	0.538416
10.3884	0.537012
11.3179	0.53577
12.25	0.534662
13.1845	0.533664
14.1211	0.532761
15.0597	0.531937
16.	0.531181
16.942	0.530485
17.8854	0.529842
18.8303	0.529244
19.7765	0.528687
20.7238	0.528166
21.6723	0.527678
22.6219	0.527219
23.5724	0.526786
24.5239	0.526378
25.4763	0.525991
26.4295	0.525624
27.3835	0.525275
28.3383	0.524944
29.2938	0.524627
30.25	0.524325
31.2068	0.524037
32.1643	0.52376
33.1224	0.523496
34.0811	0.523242
35.0403	0.522997
36.	0.522763
36.9603	0.522537
37.921	0.522319
38.8822	0.522109
39.8439	0.521906
40.806	0.52171
41.7686	0.521521
42.7315	0.521338
43.6949	0.521161
44.6586	0.520989
45.6228	0.520822
46.5873	0.520661
};
\addplot[dashed] coordinates{ (0,0.504539) (40,0.504539) } ;
\addplot[very thick] table {
0.381966	0.381966
1.	0.333333
1.69722	0.312006
2.43845	0.29945
3.20871	0.290969
4.	0.284761
4.80742	0.27997
5.62772	0.276131
6.45862	0.272967
7.29844	0.270303
8.1459	0.268019
9.	0.266035
9.85995	0.264289
10.7251	0.262738
11.5949	0.261349
12.4689	0.260095
13.3467	0.258956
14.228	0.257915
15.1125	0.256959
16.	0.256078
16.8902	0.255261
17.783	0.254502
18.6782	0.253794
19.5756	0.253132
20.4751	0.25251
21.3765	0.251926
22.2798	0.251375
23.1849	0.250854
24.0917	0.250361
25.	0.249893
25.9098	0.249449
26.8211	0.249026
27.7337	0.248622
28.6477	0.248237
29.5628	0.247869
30.4792	0.247516
31.3967	0.247178
32.3153	0.246854
33.235	0.246543
34.1557	0.246243
35.0774	0.245955
36.	0.245677
36.9235	0.245409
37.8479	0.24515
38.7732	0.244901
39.6993	0.244659
40.6261	0.244426
41.5538	0.2442
42.4822	0.243981
43.4113	0.243768
};
\addplot[dashed] coordinates{ (0,0.223361) (40,0.223361) };

\end{axis}

\node[right] at (6.9,3.85) { \small $g(0.1)$ };
\node[right] at (6.9,2.2) { \small $g(0.5)$ };
\node[right] at (6.9,1) { \small $g(1)$ };

\end{tikzpicture}
\caption{The delay probability $C(\sl,\l)$ with $\sl = [ \l + \beta \sqrt{\l} ]$ for $\beta = 0.1,\ 0.5,$ and 1 as a function of $\l$. The limiting values $g(\beta)$ are plotted as dashed lines.}
\label{fig:delay_probs_HW_MMs}
\end{figure}
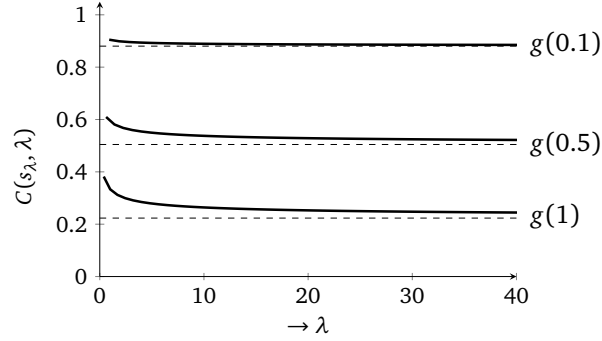

Many of the subsequent results in this survey presented for the $M/M/\sl$ queue can also be derived for the $M/M/\sl/\sl$ queue; we refer to \cite{Janssen2008b} for a detailed overview of these results.
Observe that $g(\beta)$ is a strictly decreasing function on $(0,\infty)$ with $g(\beta) \to 1$ as $\beta\to 0$ and $g(\beta)\to 0$ for $\beta\to\infty$.
Thus all possible delay probabilities are achievable in the QED regime, which will prove useful for the dimensioning of systems (see Section \ref{sec:dim}).
Although Proposition \ref{prop:HalfinWhitt_delay_probability} is an asymptotic result for $\l\to\infty$, Figure \ref{fig:delay_probs_HW_MMs} shows that $g(\beta)$ can serve as an accurate approximation for the delay probability for relatively small $\l$.
From Proposition \ref{prop:HalfinWhitt_delay_probability}, it also follows that under \eqref{eq:HalfinWhitt_scaling},
the limiting mean delay in \eqref{eq:MMs_wait} is given by
\begin{equation}
\label{eq:halfinwhitt_wait}
\frac{C(\sl,\lambda)}{(1-\rho_\l)\sqrt{\sl}} \to \frac{g(\beta)}{\beta}=: h(\beta), \quad \text{as }\l\to\infty.
\end{equation}
%
This implies that in the QED regime, the mean delay vanishes at rate $1/\sqrt{\sl}$ as $\l\to\infty$.
By Little's law this implies that the mean queue length is $O(\sqrt{\sl})$.
While these are all steady-state results, similar statements can be made for the entire queue-length process, as shown next.

\vspace{.2cm}
\noindent\textbf{Process-level convergence.}
QED scaling also gives rise to process-level limits,
where the evolution of the system occupancy, properly centered around $\sl$
and normalized by $\sqrt{\sl}$, converges to a diffusion process as $\l\to\infty$,
which again is fully characterized by the single parameter $\beta$.
This reflects that the system state typically hovers around the full-occupancy
level $\sl$, with natural fluctuations of the order $\sqrt{\sl}$.
Obtaining rigorous statements about stochastic-process limits poses considerable mathematical challenges.
Rather than presenting the deep technical details of the convergence results, we give a heuristic explanation of how the limiting process arises and what it should look like.

The queue-length process $Q^{(\sl)}(t)$ in Figure \ref{fig:sample_paths_lambda10} with scaling rule $\sl = [\l + \beta \sqrt{\l}]$ appears to concentrate around the level $\sl$.
As argued before, the stochastic fluctuations are of order $\sqrt{\l}$, or equivalently $\sqrt{\sl}$.
For that reason, we consider the centered and scaled process
\begin{equation}
\label{eq:intro_scaled_queue_length_process}
\bar Q^{(\sl)}(t) := \frac{ Q^{(\sl)}(t) - \sl}{\sqrt{\sl}}, \qquad \text{ for\ all } t\geq 0,
\end{equation}
and ask what happens to this process as $\l\to\infty$.
First, we consider the mean drift conditioned on $\bar Q^{(\sl)}(t) = x$.
When $x> 0$, this corresponds to a state in which $Q^{(\sl)}(t)>\sl$ and hence all servers are occupied.
Therefore, the mean rate at which jobs leave the system is $\sl$, while the arrival rate remains $\l$, so that the mean drift of $\bar Q^{(\sl)}(t)$ in $x>0$ satisfies
\begin{equation}
\frac{\l - \sl}{\sqrt{\sl}} \to -\beta, \qquad \text{as }\l\to\infty,
\end{equation}
under the scaling $\sqrt{\sl}(1-\rho_\l)\to \beta$ in \eqref{eq:HalfinWhitt_scaling}.
When $x\leq 0$, only $\sl + x\sqrt{\sl}$ servers are working, so that the net drift is
\begin{equation}
\frac{\l - (\sl + x\sqrt{\sl} )}{\sqrt{\sl}} \to -\beta-x, \qquad \text{as }\l\to\infty.
\end{equation}
Now, imagine what happens to the sample paths of $(\bar{Q}^{(\sl)}(t))_{t\geq 0}$ as we increase $\l$.
Within a fixed time interval, larger $\l$ and $\sl$ will trigger more and more events, both arrivals and departures.
Also, the jump size at each event epoch decreases as $1/\sqrt{\sl}$ as a consequence of the scaling in \eqref{eq:intro_scaled_queue_length_process}.
Hence, there will be more events, each with a smaller impact, and in the limit as $\l\to\infty$, there will be infinitely many events of infinitesimally small impact.
This heuristic explanation suggests that the process $\bar Q^{(\sl)}(t)$ converges to a stochastic-process limit, which is continuous, and has infinitesimal drift ${-}\beta$ above zero and ${-}\beta-x$ below zero.
Figure \ref{fig:sample_paths_diffusion} visualizes the emergence of the suggested scaling limit as $\l$ and $\sl$ increase.
\begin{figure}
\begin{subfigure}{0.32\textwidth}
\centering
\includegraphics[scale=0.8]{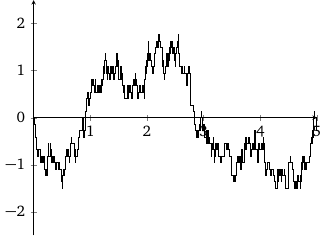}
\caption{$\l = 50$}
\end{subfigure}
\begin{subfigure}{0.32\textwidth}
\centering
\includegraphics[scale=0.8]{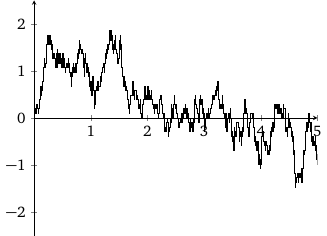}
\caption{$\l=100$}
\end{subfigure}
\begin{subfigure}{0.32\textwidth}
\centering
\includegraphics[scale=0.8]{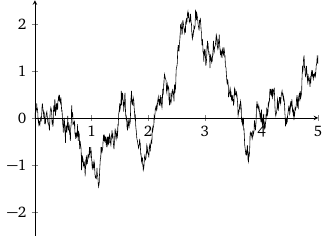}
\caption{$\l=500$}
\end{subfigure}
\caption{Sample paths of the normalized queue length process $\bar Q^{(\sl)}(t)$ with $\l = 50$, $\l=100$ and $\l=500$ and $\sl = [\l+0.5\sqrt{\l}]$.}
\label{fig:sample_paths_diffusion}
\end{figure}
The following theorem by Halfin \& Whitt \cite{Halfin1981} characterizes this scaling limit formally.
\begin{theorem}
\label{thm:Halfin_Whitt_diffusion}
Let $\bar Q^{(\sl)}(0)\, \Rightarrowd D(0) \in \mathbb{R}$ and $\sqrt{\sl}(1-\rho_\l)\to\beta$. Then for all $t\geq 0$,
\begin{equation}
\bar Q^{(\sl)}(t) \Rightarrowd D(t),\qquad \text{ as }\l\to\infty,
\end{equation}
where $D(t)$ is the diffusion process with infinitesimal drift $m(x)$ given by
\begin{equation}
m(x) = \left\{
\begin{array}{ll}
-\beta, & \text{if }x> 0,\\
-\beta-x, & \text{if } x \leq 0
\end{array}\right.
\label{222}
\end{equation}
and infinitesimal variance $\sigma^2(x) = 2$.
\end{theorem}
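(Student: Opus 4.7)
The plan is to prove this via the standard \emph{generator convergence} method for Markov processes (as in Ethier--Kurtz): I identify the infinitesimal generator $L^{(\sl)}$ of the scaled chain $\bar Q^{(\sl)}$, show that it converges pointwise on a suitable test-function core to the generator $L$ of the candidate diffusion $D$, and then invoke a weak-convergence theorem to lift this to convergence of processes in the Skorohod topology.

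First I set up the generator computation. The unscaled process $Q^{(\sl)}$ is a birth--death chain with birth rate $\l$ and death rate $\min(k,\sl)$ at state $k$. For a test function $f\in C_c^{\infty}(\R)$, define $g^{(\sl)}(k) := f\bigl((k-\sl)/\sqrt{\sl}\bigr)$ and write $x = (k-\sl)/\sqrt{\sl}$. Applying the birth--death generator gives
\begin{equation*}
L^{(\sl)} g^{(\sl)}(k) = \l\bigl[f(x+1/\sqrt{\sl})-f(x)\bigr] + \min(k,\sl)\bigl[f(x-1/\sqrt{\sl})-f(x)\bigr].
\end{equation*}
A Taylor expansion to second order yields, after cancellation,
\begin{equation*}
L^{(\sl)} g^{(\sl)}(k) = \frac{\l-\min(k,\sl)}{\sqrt{\sl}}\,f'(x) + \frac{\l+\min(k,\sl)}{2\sl}\,f''(x) + O\bigl(\sl^{-1/2}\bigr).
\end{equation*}

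Second I pass to the limit in the two regimes. For $x>0$ one has $\min(k,\sl)=\sl$, so by the scaling assumption $\sqrt{\sl}(1-\rho_\l)\to\b$ the drift coefficient tends to $-\b$; the diffusion coefficient $(\l+\sl)/(2\sl)$ tends to $1$. For $x\leq 0$ one has $\min(k,\sl) = k = \sl + x\sqrt{\sl}$, so the drift coefficient tends to $-\b-x$, while again the diffusion coefficient tends to $1$. Hence $L^{(\sl)} g^{(\sl)}(k) \to m(x)f'(x) + f''(x)$, which is precisely the generator $L$ of the candidate diffusion (recall infinitesimal variance $\sigma^2(x)=2$). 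Note that $m$ is continuous at $0$ (both branches give $-\b$) and globally Lipschitz, so the SDE $dD(t) = m(D(t))\,dt + \sqrt{2}\,dB(t)$ has a pathwise unique strong solution and the martingale problem for $(L,C_c^{\infty})$ is well-posed.

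The main obstacle, and the step I would spend the most care on, is upgrading pointwise generator convergence to weak convergence of processes. This requires (i) tightness of $\{\bar Q^{(\sl)}\}_{\l}$ on $D([0,T],\R)$, and (ii) identification of every subsequential limit with the unique solution of the martingale problem. For (i) I would apply Aldous' criterion: the scaled jumps have size $1/\sqrt{\sl}\to 0$, and controlling the modulus of continuity reduces via the semimartingale decomposition $\bar Q^{(\sl)}(t)=\bar Q^{(\sl)}(0)+\int_0^t L^{(\sl)}\mathrm{id}\,ds + M^{(\sl)}(t)$ to bounding the drift and the quadratic variation of the compensated counting-process martingale on short intervals, both of which follow from the generator expansion above together with the observation that the chain cannot escape any bounded neighbourhood of $0$ too quickly (one proves a Lyapunov bound using $V(x)=x^2$, whose drift under $L^{(\sl)}$ is negative outside a compact set uniformly in $\l$). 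For (ii), once tightness is established, pointwise generator convergence on $C_c^{\infty}(\R)$ together with well-posedness of the martingale problem for $L$ identifies the limit uniquely, completing the proof.
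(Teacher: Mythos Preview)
The paper does not actually give a proof of this theorem. It presents only the heuristic drift computation preceding the statement (which your generator expansion reproduces almost verbatim) and then, in Section~\ref{sec:properties}, remarks that ``the formal proof in Halfin \& Whitt relies on Stone's Theorem for the weak convergence of birth-death processes to diffusion processes.'' So there is no in-text argument to compare against beyond that heuristic and the citation.

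Your route is therefore genuinely different from the one the paper points to. Stone's Theorem is a classical, purpose-built result: it gives weak convergence of a sequence of birth--death processes to a diffusion directly from convergence of the (suitably scaled) birth and death rates to the drift and diffusion coefficients, so once you have verified the drift and variance limits (as both you and the paper do heuristically), the theorem packages tightness and identification of the limit into a single black box. Your Ethier--Kurtz/martingale-problem approach is more general---it does not rely on the birth--death structure---but the price is exactly the step you flag as delicate: you must establish tightness and well-posedness by hand. Your sketch of this step (Aldous' criterion, Lyapunov control via $V(x)=x^2$, uniqueness from the Lipschitz drift) is correct in outline and would go through, but it is considerably more work than invoking Stone. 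In short: your argument is sound and more modern in flavour, while the paper's cited approach is shorter because it exploits the one-dimensional birth--death structure that your method does not need.
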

The limiting diffusion process $(D(t))_{t\geq 0}$ in Theorem \ref{thm:Halfin_Whitt_diffusion} is a combination of a negative-drift Brownian motion in the upper half plane and an Ornstein-Uhlenbeck process in the lower half plane.
We refer to this hybrid diffusion process as the Halfin-Whitt diffusion \cite{Leeuwaarden2012,Fralix2014,Browne1995}. Studying this diffusion process provides valuable information for the systems performance.

The fact that the properly centered and scaled occupancy process $(\bar Q^{(\sl)}(t))_{t\geq 0}$ has the weak limit $(D(t))_{t\geq 0}$, as stated in Theorem \ref{thm:Halfin_Whitt_diffusion}, has several important consequences. The boundary between the Brownian motion and the
Ornstein--Uhlenbeck process can be thought of as the number of servers, and
$(D(t))_{t\geq 0}$ will keep fluctuating between these two regions. The
process mimics a single-server queue above zero, and an infinite-server
queue below zero, for which Brownian motion and the Ornstein--Uhlenbeck process are indeed the respective heavy-traffic limits. As $\beta$ increases towards $+\infty$, capacity grows and the Halfin--Whitt diffusion will spend more time below zero.

The diffusion process $(D(t))_{t\geq 0}$ can thus be employed to obtain simple approximations for the system behavior.
Theorem \ref{thm:Halfin_Whitt_diffusion} supports approximating the occupancy process in the $M/M/\sl$ queue as
\begin{equation}
Q^{(\sl)}(\cdot)\stackrel{{\normalfont d}}{\approx} \sl+\sqrt{\sl} D(\cdot)
\end{equation}
when $\lambda$ and $\sl$ are large. It is natural to expect that this carries over to approximations for the steady-state
distribution of $(D(t))_{t\geq 0}$.
Let $D(\infty) := \lim_{t\to\infty} D(t)$ and $Q^{(\sl)}(\infty) := \lim_{t\to\infty} Q^{(\sl)}(t)$ denote the steady-state random variables. Then,
\begin{equation}\label{approx1}
Q^{(\sl)}(\infty)\stackrel{{\normalfont d}}{\approx} \sl+\sqrt{\sl} D(\infty).
\end{equation}
To rigorously justify the approximation \eqref{approx1} it is still required to show that the sequence
of steady-state distributions associated with the queue-length process, when appropriately scaled, converge to the
steady-state distribution associated with diffusion process,
\begin{equation}
\frac{Q^{(\sl)}(\infty)-s_\lambda}{\sqrt{s_\lambda}} \Rightarrowd D(\infty), \quad {\rm as} \ \lambda\to\infty.
\end{equation}
This has been done in \cite{Halfin1981}.

The steady-state characteristics of the diffusion were studied in \cite{Halfin1981}. Since the diffusion process $(D(t))_{t\geq 0}$ has piecewise linear drift, the procedure developed in \cite{Browne1995} to find the stationary distribution can be followed. This procedure consists of composing the density function as in \eqref{222} based on the density function of a Brownian motion with drift $-\beta$ for $x>0$ and of an Ornstein-Uhlenbeck process with drift $-\beta-x$ for $x<0$.
%
The density function of the stationary distribution for $(D(t))_{t\geq 0}$ is then proportional to $\f(x+\beta)/\F(\b)$ for negative levels $x<0$ and proportional to $\exp(\int_{0}^x m(u)\dd u)$ for $x\geq 0$.
Then, upon normalization, we find that
\begin{align}
\P(D(\infty) > 0 ) &= g(\beta), \label{eq:diff_1}\\
\P(D(\infty) \geq x | D(\infty) > 0) &= {\rm e}^{-\beta x} ,\quad \text{for }x>0,\\
\label{eq:diff:3}
\P(D(\infty) \leq x | D(\infty) \leq 0 ) &= \frac{\F(\beta+x)}{\F(\beta)},\quad \text{for }x\leq 0.
\end{align}
%
This confirms the earlier result for the Erlang C formula in \eqref{eq:HW_delay_prob}, i.e.
\begin{equation}
C(\sl,\lambda) \rightarrow \P( D(\infty) > 0 ) = g(\beta), \quad \text{as } \l\to\infty,
\end{equation}
and the scaled limiting mean delay in \eqref{eq:halfinwhitt_wait}
\begin{equation}
\frac{\E[ Q^{(\sl)}]}{\sqrt{\sl}} \rightarrow \E[D(\infty)] = \int_0^\infty g(\beta){\rm e}^{-\beta x} \dd x = \frac{g(\beta)}{\beta}, \quad \text{as } \l\to\infty.
\end{equation}
It is also of interest to study time-dependent characteristics like mixing times, time-dependent distributions and first passage times, to enhance our understanding of how the $M/M/\sl$ queue, behaves over various time and space scales. The mixing time is closely related to the spectral gap, which for the Halfin--Whitt diffusion $(D(t))_{t\geq 0}$ has been identified by Gamarnik \& Goldberg \cite{Gamarnik2013} building on the results of van Doorn \cite{Doorn1985} on the spectral gap of the $M/M/\sl$ queue. An alternative derivation of this spectral gap was presented in by \cite{Leeuwaarden2011,Leeuwaarden2012}, along with expressions for the Laplace transform over time, and the large-time asymptotics for the time-dependent density. First passage times to large levels corresponding to highly congested states were obtained in \cite{maglaraszeevi,Fralix2014}.

For obvious reasons, the QED regime is also referred to as the Halfin-Whitt regime, and both these names are used interchangeably in the literature.

\subsection{Bulk-service queue}\label{ss:ex2}

We next consider the bulk-service queue, a standard model for digital communication \cite{Bruneel1993}, but also many more applications among which wireless networks, road traffic, reservation systems, health care; see \cite[Chap.~2]{johanthesis} for an overview.
Although the bulk-service queue gives rise to a plain reflected random walk, and is not a multi-server queue, in the same sense as the $M/M/s$ queue, we explain below how these two models are connected.

Let jobs again arrive according to a Poisson process with rate $\l$, but now we discretize time, so the number of new arrivals per time period is given by
a Pois$(\l)$ random variable.
Let $Q^{(\sl)}_k$ denote the number of delayed jobs at the start of the $k^{\rm th}$ period and assume that the system is able to process $\sl$ jobs at the end of each period.
The queue length process can then be described by the Lindley-type recursion \cite{Lindley1952}
\begin{equation}
\label{eq:discrete_recursion}
Q^{(\sl)}_{k+1} = \max\{ 0,Q^{(\sl)}_k + \Pois_k(\l) - \sl \},
\end{equation}
with $Q^{(\sl)}_0 = 0$ and $(\Pois_k(\l))_{k\geq 0}$ i.i.d.~random variables.
The queue length process is thus characterized by a random walk with i.i.d.~steps of size
$(\Pois(\l)-\sl)$, with a reflecting barrier at zero. We can iterate the recursion in \eqref{eq:discrete_recursion} to find
\begin{align}
Q^{(\sl)}_{k+1} &= \max\left\{ 0 , Q^{(\sl)}_k + \Pois_k(\l)-\sl \right\} \nonumber\\
&= \max\left\{ 0 , \max\{ 0 , Q^{(\sl)}_{k-1} + (\Pois_{k-1}(\l)-\sl)\} + (\Pois_k(\l)-\sl)\} \right\}\nonumber \\
&= \max\left\{ 0 , (\Pois_k(\l)-\sl) , Q^{(\sl)}_{k-1} + (\Pois_k(\l)-\sl) + (\Pois_{k-1}(\l)-\sl)\right\}\nonumber \\
&= \max_{0\leq j\leq k} \Big\{ \sum_{i=1}^j (\Pois_{k-i}(\l)-\sl)\Big\}
\equalD  \max_{0\leq j\leq k} \Big\{ \sum_{i=1}^j (\Pois_i(\l)-\sl) \Big\},
\label{eq:max_randomwalk}
\end{align}
where the last equality holds in distribution due to the duality principle for random walks, see e.g.~\cite[Sec.~7.1]{Ross1996}.
Stability requires that the mean step size satisfies $\E[\Pois(\l) - \sl] = \l-\sl < 0$.
We use the shorthand notation for the partial sum $S_k := \sum_{i=1}^k (\Pois_i(\l)-\sl)$.
Let $Q^{(\sl)}:= \lim_{k\to\infty} Q^{(\sl)}_k$ denote the stationary queue length.
The probability generating function (pgf) of $Q^{(\sl)}$ can then be expressed in terms of the pgf of the positive parts of the partial sum:
\begin{equation}
\label{eq:Spitzers_identity}
\E[ z^{Q^{(\sl)}} ]
= \exp\Big\{  - \sum_{k=1}^\infty \frac{1}{k}\, (1- \E[z^{S_k^+}]) \Big\},\qquad |z|\leq 1.
\end{equation}
From \eqref{eq:Spitzers_identity} we obtain for the mean queue length and empty-queue probability the expressions
\begin{align}
\E[Q^{(\sl)}] &= \sum_{k=1}^\infty \frac{1}{k}\, \E[ S_k^+ ],\nonumber\\
\P(Q^{(\sl)}= 0 ) &= \exp\Big\{ -\sum_{k=1}^\infty \frac{1}{k}\, \P( S_k^+ > 0 ) \Big\}.
\label{eq:spitzer_expressions}
\end{align}

There is a connection between the bulk-service queue and the $M/D/s$ queue.
To see this, consider the number of queued jobs $Q^{(\sl)}(k)$ at time epochs $k=0,1,2,\ldots$.
The we set the period length equal to one service time.
The number of new arrivals per time period is then given by the sequence of i.i.d.~random variables $(\Pois_k(\l))_{k\geq 1}$.
At the start of the $k^{\rm th}$ period, $Q^{(\sl)}_k$ customers are waiting.
Since the service time of a customer is equal to the period length, all jobs that are in service at the beginning of the period will have left the system by time $k+1$.
This implies that $\min\{Q^{(\sl)}_k,\sl\}$ of the jobs that were queued at time $k$ are taken into service during period $k$.
These however cannot possibly have departed before the end of the period, due to their deterministic service times.
If $Q^{(\sl)}_k<\sl$, then additionally $\min\{ \Pois_k(\l) , \sl-Q^{(\sl)}_k \}$ of the new arrivals are taken into service.
This yields a total of $\Pois_k(\l)$ arrivals, and $\min\{Q^{(\sl)}_k+\Pois_k(\l),\sl\}$ departures from the queueing system during period $k$.
In total, this adds up to the Lindley recursion \eqref{eq:discrete_recursion}.
Hence, although the bulk-service queue is technically not a multi-server queue, it gives rise to a recursive relation that describes the $M/D/s$ queue.

The reason why we choose to explain the QED regime through the bulk-service queue is that the elementary random walk perspective allows for a rather direct application of the CLT.
To see this, let us ask ourselves what happens if $\l$ grows large using the square-root rule \eqref{beta}.
Since $\E[\Pois(\l)-\sl] = \l-\sl = -\beta\sqrt{\l} + o(\sqrt{\l})$, it makes sense to consider the scaled queue length process $\bar Q^{(\sl)}_k := Q^{(\sl)}_k/\sqrt{\l}$ for all $k\geq 0$, with scaled steps $Y_k^{(\sl)} := (\Pois_k(\l)-\sl)/\sqrt{\l}$.
Dividing both sides of \eqref{eq:max_randomwalk} by $\sqrt{\l}$ then gives
\begin{equation}
\bar Q^{(\sl)}_{k+1} = \max_{0\leq j\leq k} \Big\{ \sum_{i=1}^j Y^{(\sl)}_k \Big\}.
\end{equation}
Hence by the CLT
\begin{equation*}
Y^{(\sl)}_k = \frac{ A^{(\l)}_k - \sl }{\sqrt\l} = \frac{A^{(\l)}_k-\l}{\sqrt\l} - \beta \ \Rightarrowd \ Y_k \equalD N(-\beta,1),
\end{equation*}
for $\l\to\infty$.
So we expect the scaled queue length process to converge in distribution to a reflected random walk with normally distributed increments, i.e.~a reflected \textit{Gaussian random walk}.
Indeed, it is easily verified that \cite{Janssen2008a}
\begin{equation}
\bar Q^{(\sl)}_k \ \Rightarrowd \  M_{\beta,k} := \max_{0\leq j\leq k} \Big\{\sum_{i=1}^j Y_i \Big\}, \quad \text{as }\l\to\infty.
\end{equation}
Let $M_\beta:= \lim_{k\to\infty} M_{\beta,k}$ denote the all-time maximum of a Gaussian random walk.
\vspace{-2mm}
It can be shown that $M_\beta$ almost surely exists and that $\bar Q^{(\sl)}:= \lim_{k\to\infty} \bar Q^{(\sl)}$ $\Rightarrowd M_\beta$ for instance by \cite[Prop.~19.2]{Spitzer1964} and \cite[Thm.~X6.1]{Asmussen2003}.
The following theorem can be proved using a similar approach as in \cite{Jelenkovic2004}.

\begin{theorem}\label{thm:gaussian_rw}
If $(1-\rho_{\l})\sqrt{\l}\to\beta$ as $\l\to\infty$, then
\begin{enumerate}
\item[{\normalfont (i)}] $\bar Q^{(\sl)} \Rightarrowd M_\beta$ as $\l\to\infty$;
\item[{\normalfont (ii)}] $\P(\bar Q^{(\sl)} = 0) \to \P(M_\beta = 0)$ as $\l\to\infty$;
\item[{\normalfont (iii)}] $\E[\bar Q^{(\sl)\,k}] \to \E[M_\beta^k]$ as $\l\to\infty$ for any $k>0$.
\end{enumerate}
\end{theorem}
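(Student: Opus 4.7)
The plan is to upgrade the CLT for the step distribution to convergence of the all-time maximum of the reflected random walk, and then promote this distributional convergence to convergence of the atom at zero and of all positive moments. The backbone of all three parts will be a uniform exponential tail estimate for $\bar Q^{(\sl)}$ that holds for all sufficiently large $\lambda$.

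For part (i), I would start from the representation
\begin{equation*}
\bar Q^{(\sl)} \equalD \sup_{k \geq 0} \sum_{i=1}^k Y^{(\sl)}_i,
\end{equation*}
which is almost surely finite under the stability condition because the mean step $(\l-\sl)/\sqrt{\l}\to -\beta<0$. For any fixed horizon $K$, the CLT gives joint convergence of the partial sums $(S^{(\sl)}_1/\sqrt{\l},\ldots,S^{(\sl)}_K/\sqrt{\l}) \Rightarrowd (\sum_{i=1}^1 Y_i, \ldots, \sum_{i=1}^K Y_i)$, whence the continuous mapping theorem yields $M^{(\sl)}_{\beta,K}\Rightarrowd M_{\beta,K}$. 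The non-routine step is to pass from the finite-horizon maximum to the all-time supremum uniformly in $\l$. The natural tool is a Chernoff--Kingman argument: the log-moment generating function expands as
\begin{equation*}
\log \E\bigl[\ee^{\theta Y^{(\sl)}_1}\bigr] \;=\; \l\bigl(\ee^{\theta/\sqrt{\l}}-1\bigr) - \theta\,\sl/\sqrt{\l} \;=\; \tfrac{1}{2}\theta^2 - \theta\beta + o(1),
\end{equation*}
which is strictly negative for any $\theta\in(0,2\beta)$ uniformly in $\l$ eventually, and this yields a bound $\P(\bar Q^{(\sl)}>x) \leq C\,\ee^{-\theta x}$ via the standard random-walk-supremum estimate, with constants $C,\theta>0$ independent of $\l$. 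Combined with the analogous Gaussian tail of $M_\beta$, part (i) follows by a tightness-plus-finite-dimensional-convergence argument.

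For (ii), I would invoke Spitzer's identity \eqref{eq:spitzer_expressions}, which gives
\begin{equation*}
\P(\bar Q^{(\sl)} = 0) \;=\; \exp\Bigl\{-\sum_{k\geq 1} k^{-1}\,\P(S^{(\sl)}_k > 0)\Bigr\}.
\end{equation*}
Termwise convergence $\P(S^{(\sl)}_k > 0)\to \P(\sum_{i=1}^k Y_i > 0)$ is immediate from the CLT for each fixed $k$, and the uniform MGF control above yields a Chernoff bound $\P(S^{(\sl)}_k > 0)\leq r^k$ for some $r\in(0,1)$ uniformly in $\l$, so dominated convergence legitimises the interchange of limit and sum. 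For (iii), the uniform exponential tail bound established above implies $\sup_\l \E[(\bar Q^{(\sl)})^p]<\infty$ for every $p>0$, giving ample uniform integrability to lift the weak convergence in (i) to convergence of moments of every positive order.

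The main obstacle throughout is therefore the uniform exponential tail bound: it is the single ingredient that simultaneously powers the tightness needed to extend finite-horizon to all-time convergence in (i), the dominated-convergence interchange in (ii), and the uniform integrability in (iii). Once it is in place, each of the three conclusions reduces to a standard argument.
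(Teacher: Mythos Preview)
Your proposal is correct and complete. The paper does not actually supply its own proof of this theorem: it simply records the finite-horizon convergence $\bar Q^{(\sl)}_k \Rightarrowd M_{\beta,k}$ (citing \cite{Janssen2008a}), points to \cite[Prop.~19.2]{Spitzer1964} and \cite[Thm.~X6.1]{Asmussen2003} for the passage to the all-time maximum, and then states that the full theorem ``can be proved using a similar approach as in \cite{Jelenkovic2004}.'' Your sketch---finite-horizon convergence via the CLT and continuous mapping, extension to the all-time supremum via a uniform Chernoff--Kingman exponential tail bound on the increments, dominated convergence in the Spitzer series for (ii), and uniform integrability from the same tail bound for (iii)---is precisely the standard route and is in line with the arguments in the cited references. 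The only ingredient deserving a line of care is the uniformity in~$\lambda$ of the MGF bound, which you have correctly identified and handled via the expansion $\log\E[\ee^{\theta Y_1^{(\sl)}}] = \tfrac12\theta^2 - \theta\beta + o(1)$.
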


Hence, Theorem \ref{thm:gaussian_rw} is the counterpart of Theorem \ref{thm:Halfin_Whitt_diffusion}, but for the bulk-service (or $M/D/\sl$) queue, rather than for the $M/M/\sl$ queue.
Both theorems identify the stochastic-process limit in the QED regime:
for the $M/M/s$ queue this is the Halfin-Whitt diffusion, and for the bulk-service queue this is the Gaussian random walk.

The Gaussian random walk is well studied \cite{Siegmund1978,Chang1997,Janssen2006,Blanchet2006,Janssen2006} and there is an intimate connection with Brownian motion. The only difference, one could say, is that Brownian motion is a continuous-time process, whereas the Gaussian random walk only changes at discrete points in time.
If $(B(t))_{t\geq 0}$ is a Brownian motion with drift $-\beta <0$ and infinitesimal variance $\sigma^2$ and $(W(t))_{t \geq 0}$ is a random walk with $N(-\beta,\sigma^2)$ distributed steps and $B(0) = W(0)$, then $W$ can be regarded as the process $B$ embedded at equidistant time epochs.
That is, $W(t) \equalD B(t)$ for all $t\in\mathbb{N}^+$.
For the maximum of both processes this coupling implies
\begin{equation}
\max_{k\in \mathbb{N}^+} W(k) = \max_{k\in \mathbb{N}^+} B(k) \leq_{\rm st}
\max_{t\in \mathbb{R}^+} B(t),
\label{eq:max_inequality}
\end{equation}
where $\leq_{\rm st}$ denotes stochastic dominance.
This difference in maxima is visualized in Figure \ref{fig:BrownianMotion_vs_GaussianRW}.
It is known that the all-time maximum of Brownian motion with negative drift $-\mu$ and infinitesimal variable $\sigma^2$ has an exponential distribution with mean $\sigma/2\mu$ \cite{Harrison1985}.
Hence, \eqref{eq:max_inequality} implies that $M_\beta$ is stochastically upper bounded by an exponential random variable with mean $1/2\beta$.
\begin{figure}
\centering
\includegraphics[scale=1]{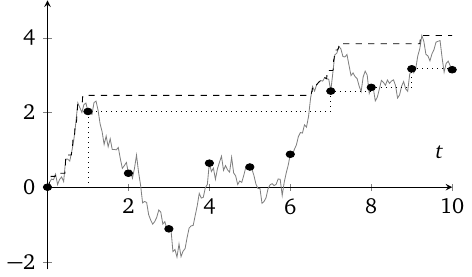}
\caption{Brownian motion (gray) and embedded Gaussian random walk (marked) with their respective running maxima (dashed and dotted, respectively).}
\label{fig:BrownianMotion_vs_GaussianRW}
\end{figure}

Despite this easy bound, precise results for $M_\beta$ are more involved. Let $\zeta$ denote the Riemann zeta function.
In \cite{Chang1997} and \cite{Janssen2006} it is shown that for $0<\beta<2\sqrt{\pi}$,
\begin{equation}
\P(M_\beta = 0) = \sqrt{2}\beta\, \exp \left\{ \frac{\beta}{\sqrt{2\pi}} \sum_{l=0}^\infty
\frac{\zeta(1/2-l)}{l!(2l+1)} \left(\frac{-\beta^2}{2}\right)^l \right\}
\end{equation}
and
\begin{equation}
\E[M_\beta] = \frac{1}{2\beta} + \frac{\zeta(1/2)}{\sqrt{2\pi}} + \frac{\beta}{4}
+ \frac{\beta^2}{\sqrt{2\pi}} \sum_{l=0}^\infty
\frac{\zeta(-1/2-l)}{l!(2l+1)(2l+2)} \left(\frac{-\beta^2}{2}\right)^l.
\end{equation}
In Figure \ref{fig:delay_probs_HW_Bulk}, we have plotted the exact empty-buffer probability and scaled mean delay, together with their asymptotic approximations.
We see that the performance measures associated with the Gaussian random walk serve as accurate approximations to performance measures describing the bulk-service queues of small to moderate size as well, just as we saw in Figure \ref{fig:delay_probs_HW_MMs} for the $M/M/\sl$ queue.

\begin{figure}
\centering
\begin{subfigure}{0.49\textwidth}
\begin{tikzpicture}[scale=0.7]
\begin{axis}[
	xmin = 0, 
	xmax = 40,
	ymin = 0,
	ymax = 1.05,
	axis line style={->},
	axis lines = left,
	xlabel = $\to \l$,
	ylabel = {$\P(Q^{(\sl)}=0)$},
	xscale=1,
	yscale=0.8]
\addplot[very thick] table {
0.381966	0.90552
1.	0.878709
1.69722	0.865297
2.43845	0.856953
3.20871	0.85114
4.	0.846798
4.80742	0.843399
5.62772	0.840646
6.45862	0.838358
7.29844	0.836418
8.1459	0.834746
9.	0.833286
9.85995	0.831997
10.7251	0.830848
11.5949	0.829815
12.4689	0.82888
13.3467	0.828028
14.228	0.827249
15.1125	0.826532
16.	0.825869
16.8902	0.825254
17.783	0.824681
18.6782	0.824147
19.5756	0.823646
20.4751	0.823175
21.3765	0.822732
22.2798	0.822314
23.1849	0.821918
24.0917	0.821543
25.	0.821187
25.9098	0.820849
26.8211	0.820526
27.7337	0.820218
28.6477	0.819924
29.5628	0.819643
30.4792	0.819374
31.3967	0.819115
32.3153	0.818867
33.235	0.818629
34.1557	0.818399
35.0774	0.818178
36.	0.817965
36.9235	0.81776
37.8479	0.817561
38.7732	0.817369
39.6993	0.817184
40.6261	0.817004
41.5538	0.816831
42.4822	0.816662
43.4113	0.816499
};
\addplot[dashed] coordinates{ (0,0.800543) (40,0.800543) } ;
\addplot[very thick] table {
0.609612	0.718204
1.40693	0.665638
2.25	0.640888
3.11722	0.625876
4.	0.615565
4.89389	0.607934
5.79623	0.602
6.70527	0.597216
7.6198	0.593256
8.53893	0.589909
9.46198	0.587031
10.3884	0.584523
11.3179	0.582312
12.25	0.580345
13.1845	0.578579
14.1211	0.576983
15.0597	0.575531
16.	0.574203
16.942	0.572982
17.8854	0.571854
18.8303	0.570809
19.7765	0.569837
20.7238	0.568929
21.6723	0.568079
22.6219	0.567281
23.5724	0.56653
24.5239	0.565822
25.4763	0.565152
26.4295	0.564517
27.3835	0.563915
28.3383	0.563342
29.2938	0.562797
30.25	0.562277
31.2068	0.56178
32.1643	0.561305
33.1224	0.56085
34.0811	0.560414
35.0403	0.559995
36.	0.559593
36.9603	0.559206
37.921	0.558833
38.8822	0.558474
39.8439	0.558127
40.806	0.557793
41.7686	0.557469
42.7315	0.557157
43.6949	0.556855
44.6586	0.556562
45.6228	0.556278
46.5873	0.556003
};
\addplot[dashed] coordinates{ (0,0.529325) (40,0.529325) } ;
\addplot[very thick] table {
0.904875	0.235113
1.86349	0.20218
2.83172	0.188182
3.80494	0.180078
4.78134	0.174664
5.76	0.170733
6.74038	0.167718
7.72211	0.165314
8.70496	0.163341
9.68873	0.161685
10.6733	0.160271
11.6586	0.159044
12.6444	0.157968
13.6308	0.157013
14.6177	0.15616
15.605	0.155391
16.5927	0.154694
17.5807	0.154057
18.5691	0.153473
19.5578	0.152936
20.5467	0.152438
21.5359	0.151976
22.5254	0.151546
23.5151	0.151143
24.505	0.150766
25.4951	0.150411
26.4854	0.150077
27.4758	0.149762
28.4665	0.149463
29.4573	0.14918
30.4482	0.148911
31.4393	0.148656
32.4305	0.148412
33.4219	0.14818
34.4134	0.147957
35.405	0.147745
36.3967	0.147541
37.3885	0.147346
38.3805	0.147158
39.3725	0.146978
40.3647	0.146804
41.3569	0.146637
42.3492	0.146476
43.3417	0.146321
44.3342	0.146171
45.3267	0.146026
46.3194	0.145885
47.3122	0.14575
48.305	0.145618
49.2979	0.145491
};
\addplot[dashed] coordinates{ (0,0.133419) (40,0.133419) };
\end{axis}

\node[right] at (6.9,3.5) { \small $\beta = 1$ };
\node[right] at (6.9,2.4) { \small $\beta = 0.5$ };
\node[right] at (6.9,0.65) { \small $\beta=0.1$ };

\end{tikzpicture}
\caption{Empty buffer probability}
\end{subfigure}
\begin{subfigure}{0.49\textwidth}
\begin{tikzpicture}[scale=0.7]
\begin{axis}[
	xmin = 0, 
	xmax = 40,
	ymin = 0,
	ymax = 5.01,
	axis line style={->},
	axis lines = left,
	xlabel = $\to \l$,
	ylabel = {$\E[Q^{(\sl)}]/\sqrt{\sl}$},
	xscale=1,
	yscale=0.8]
\addplot[very thick] table
{
0.381966	0.190983
1.	0.176741
1.69722	0.168792
2.43845	0.163645
3.20871	0.159981
4.	0.157209
4.80742	0.155019
5.62772	0.153234
6.45862	0.151743
7.29844	0.150473
8.1459	0.149375
9.	0.148414
9.85995	0.147563
10.7251	0.146804
11.5949	0.146119
12.4689	0.145499
13.3467	0.144934
14.228	0.144415
15.1125	0.143938
16.	0.143496
16.8902	0.143086
17.783	0.142704
18.6782	0.142347
19.5756	0.142012
20.4751	0.141697
21.3765	0.141401
22.2798	0.141121
23.1849	0.140856
24.0917	0.140605
25.	0.140366
25.9098	0.140139
26.8211	0.139922
27.7337	0.139716
28.6477	0.139518
29.5628	0.139329
30.4792	0.139148
31.3967	0.138975
32.3153	0.138808
33.235	0.138647
34.1557	0.138493
35.0774	0.138344
36.	0.138201
36.9235	0.138062
37.8479	0.137929
38.7732	0.1378
39.6993	0.137675
40.6261	0.137554
41.5538	0.137436
42.4822	0.137323
43.4113	0.137213
};
\addplot[dashed] coordinates{	(0,0.126373) (40,0.126373) } ;
\addplot[very thick] table {
0.609612	0.609612
1.40693	0.596739
2.25	0.588282
3.11722	0.58244
4.	0.57812
4.89389	0.574763
5.79623	0.572057
6.70527	0.569817
7.6198	0.567921
8.53893	0.56629
9.46198	0.564866
10.3884	0.56361
11.3179	0.56249
12.25	0.561483
13.1845	0.560572
14.1211	0.559742
15.0597	0.558982
16.	0.558282
16.942	0.557634
17.8854	0.557034
18.8303	0.556474
19.7765	0.555951
20.7238	0.55546
21.6723	0.554999
22.6219	0.554565
23.5724	0.554155
24.5239	0.553766
25.4763	0.553398
26.4295	0.553048
27.3835	0.552715
28.3383	0.552398
29.2938	0.552095
30.25	0.551805
31.2068	0.551528
32.1643	0.551262
33.1224	0.551007
34.0811	0.550762
35.0403	0.550527
36.	0.5503
36.9603	0.550081
37.921	0.54987
38.8822	0.549667
39.8439	0.54947
40.806	0.54928
41.7686	0.549096
42.7315	0.548918
43.6949	0.548746
44.6586	0.548578
45.6228	0.548416
46.5873	0.548259
};
\addplot[dashed] coordinates{ (0,0.532063) (40,0.532063) } ;
\addplot[very thick] table {
0.904875	4.52438
1.86349	4.51542
2.83172	4.50758
3.80494	4.50172
4.78134	4.49721
5.76	4.4936
6.74038	4.49064
7.72211	4.48815
8.70496	4.48602
9.68873	4.48417
10.6733	4.48254
11.6586	4.48109
12.6444	4.47979
13.6308	4.47862
14.6177	4.47756
15.605	4.47658
16.5927	4.47568
17.5807	4.47486
18.5691	4.47409
19.5578	4.47337
20.5467	4.4727
21.5359	4.47208
22.5254	4.47149
23.5151	4.47094
24.505	4.47041
25.4951	4.46992
26.4854	4.46945
27.4758	4.469
28.4665	4.46858
29.4573	4.46817
30.4482	4.46779
31.4393	4.46742
32.4305	4.46707
33.4219	4.46673
34.4134	4.4664
35.405	4.46609
36.3967	4.46579
37.3885	4.4655
38.3805	4.46522
39.3725	4.46495
40.3647	4.4647
41.3569	4.46445
42.3492	4.4642
43.3417	4.46397
44.3342	4.46374
45.3267	4.46352
46.3194	4.46331
47.3122	4.4631
48.305	4.4629
49.2979	4.46271
};
\addplot[dashed] coordinates{ (0,4.44199) (40,4.44199) };

\end{axis}

\node[right] at (6.9,4) { \small $\beta = 0.1$ };
\node[right] at (6.9,0.6) { \small $\beta = 0.5$ };
\node[right] at (6.9,0.1) { \small $\beta = 1$ };

\end{tikzpicture}
\caption{Scaled queue length}
\end{subfigure}
\caption{Delay probability and mean delay in the bulk-service queue with $\sl = \l + \beta \sqrt{\l}$ and $\beta=0.1,$ $0.5$ and $1$ as a function of $\l$. The asymptotic approximations are plotted as dashed lines.}
\label{fig:delay_probs_HW_Bulk}
\end{figure}
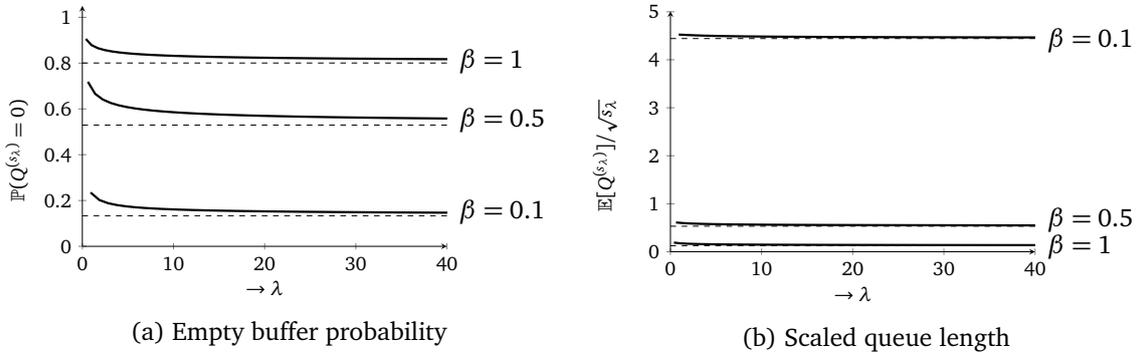

\section{Key QED properties}\label{sec:properties}

Now that we have seen how the square-root rule \eqref{beta} yields non-degenerate limiting behavior in classical queueing models, we shall summarize the revealed  QED properties and argue that these properties should hold for a more general class of models.
The first property relates to the efficient usage of resources, expressed as
\begin{equation}\label{eq:efficiency}
{\rm system\ load}
\sim 1-\frac{\rm constant}{\sqrt{\rm system\ size}}. \tag{Efficiency} \end{equation}
This property for the $M/M/\sl$ queue and bulk-service queue is a direct consequence of the square-root rule.
The second distinctive property is the balance between QoS and efficiency:
\begin{equation}\label{eq:balance}
\P({\rm delay}) \to {\rm constant},  \tag{Balance}
\end{equation}
as the system size increases indefinitely.
Indeed, we have shown that under \eqref{beta} and letting $\l,\sl\to\infty$ that both limiting functions $g(\b)$ in the $M/M/\sl$ queue and $\P(M_\beta>0)$ in the bulk-service queue can take all values in the interval $(0,1)$ by tuning the parameter $\beta$.
The third property relates to good QoS:
\begin{equation}\label{eq:QoS}
\E[{\rm delay}] = O(1/\sqrt{{\rm system\ size}}). \tag{QoS}
\end{equation}
Indeed, we have
\begin{equation}
\E[W^{(\sl)}] = \frac{h(\beta)}{\sqrt{\sl}} + o(1/\sqrt{\sl}) \qquad \text{and} \qquad \E[Q^{(\sl)}] = \sqrt{\sl}\,\E[M_\beta] + o(1/\sqrt{\sl}),
\end{equation}
in the $M/M/\sl$ queue and bulk-service queue, respectively.
Hence the mean delay vanishes at rate $1/\sqrt{\sl}$. Thus, an emerging appealing property of the QED regime is that the sojourn time of a customer is dominated by the magnitude of its service requirement. This is contrasting with the so-called non-degenerate slowdown regime, where the slowdown (the ratio between the sojourn time and the service time) is strictly larger than one  \cite{Atar2012}.

Since the mathematical underpinning of these properties comes from the CLT (as shown in Section \ref{sec:three_examples}), we can expect the properties to hold for a larger class of models.
We will illustrate this by discussing several extensions of the basic models discussed in Section \ref{sec:three_examples}.
The easiest way to do so seems to interpret the bulk-service queue as a many-sources model.
Consider a stochastic system in which demand per period is given by some random variable $A$, with mean $\mu_A$ and variance $\sigma_A^2<\infty$.
For systems facing large demand we propose to set the capacity according to the more general rule \[s = \mu_A + \beta\sigma_A,\] which consists of a minimally required part $\mu_A$ and a variability hedge $\beta\sigma_A$.
Assume that the demand is generated by $n$ stochastically identical and independent sources.
Each source $i$ generates $A_{i,k}$ work in the $k$th period, with $\E[A_{i,k}] = \mu$ and ${\rm Var}\,A_{i,k} = \sigma^2$.
Then the total amount of work arriving to the system during one period is $A_k^{(n)} = \sum_{i=1}^n A_{i,k}$ with mean $n\mu$ and variance $n\sigma^2$.
Assume that the system is able to process a deterministic amount of work $s_n$ per period and denote by $Q^{(n)}_k$ the amount of work left over at the end of period $k$.
Then,
\begin{equation}
 Q^{(n)}_{k+1} = \left( Q^{(n)}_k + A^{(n)}_k - s_n \right)^+.
 \end{equation}
Given that $s_n >  \E[A^{(n)}_1] = n\mu$, the steady-state limit $Q^{(n)} := \lim_{t\to\infty} Q^{(n)}(t)$ exists and satisfies \begin{equation} Q^{(n)} \equalD  \left( Q^{(n)} + A^{(n)}_k - s_n \right)^+.
\label{eq:bulk_service_stationary_recursion}
\end{equation}
With this many-sources interpretation \cite{Anick1982,Janssen2005,Janssen2008}, increasing the system size is done by increasing $n$, the number of sources.
As we have seen before, it requires a rescaling of the process $Q^{(n)}$ by an increasing sequence $c_n$, to obtain a non-degenerate scaling limit $Q := \lim_{n\to\infty} Q^{(n)}/c_n$.
(We omit the technical details needed to justify the interchange of limits.) From \eqref{eq:bulk_service_stationary_recursion} it becomes clear that the scaled increment
\begin{equation}
\frac{A^{(n)}_k - s_n}{c_n} = \frac{\sum_{i=1}^n A_{i,k} - n\mu}{c_n} + \frac{n\mu - s_n}{c_n}
\end{equation}
only admits a proper limit if $c_n$ is of the form $c_n = O(\sqrt{n})$, by virtue of the CLT, and $(s_n-n\mu)/c_n \to \beta >0$ as $n\to\infty$.
Especially for $c_n = \sigma\sqrt{n}$, the standard deviation of the demand per period, this reveals that $Q$ has a non-degenerate limit, which is equal in distribution to the maximum of a Gaussian random walk with drift ${-}\beta$ and variance 1, if
\[ s_n = n\mu+\beta \sigma \sqrt{n} + o(\sqrt{n}).
\]
Moreover, the results for the Gaussian random walk presented in Section \ref{ss:ex2} are applicable to this model and the key features of the QED scaling carry over to this more general setting.
That is, for the bulk-service queue under the general assumptions above we get the QED approximation
\begin{equation}
\E[Q^{(n)}] \approx \sigma\sqrt{n}\,\E[M_\beta] \approx \frac{\sigma\sqrt{n}}{2\beta}
\end{equation}
for small $\beta$.
Thus, the many-sources framework shows that the QED scaling finds much wider application than just queueing models with Poisson input.

Let us reflect on a key technical difference between the bulk-service queue and the $M/M/s$ queue. The bulk-service queue is and remains a one-dimensional reflected random walk, even under the QED scaling. Therefore, to establish the QED limits for the performance measure, one only needs to apply the CLT to the increments of the random walk, which readily shows that the queue converges to the Gaussian random walk. Analysis of multi-server queues is typically more challenging. Establishing QED limits for the elementary $M/M/s$ queue already contains some technically advanced steps. While we explained the high-level insights to argue the convergence of the birth-death process taking discrete steps to the continuous diffusion process, the formal proof in Halfin \& Whitt \cite{Halfin1981} relies on Stone's Theorem \cite{stone1963limit,iglehart1974weak,kou2004diffusion} for the weak convergence of birth-death processes to diffusion processes. However, for multi-server queue that cannot be viewed as a birth-death process, Stone's Theorem cannot be applied and entirely different techniques are needed; see Subsection \ref{sub:general}.

\section{Dimensioning}
\label{sec:dim}
We adopt the term \textit{dimensioning} used by Borst et al.~\cite{Borst2004} to say that the capacity of a system is adapted to the load in order to reach certain performance levels.
In \cite{Borst2004} dimensioning refers to the staffing problem in a large-scale call center and key ingredients are the square-root rule in \eqref{beta} and the QED regime.
We now revisit the results in \cite{Borst2004} and its follow-up works to explain this connection to the QED regime. {We also  discuss the time-varying setting in which jobs arrive according to a non-homogeneous Poisson process, and  the capacity is dynamically adapted to the load.}

\subsection{Constraint satisfaction}
\label{sec:intro_constraint}
Consider the $M/M/s$ queue with arrival rate $\l$ and service rate $\mu=1$.
A classical dimensioning problem is to determine the minimum number of servers $s$ necessary to achieve a certain target level of service, say in terms of delay.

Suppose we want to determine the minimum number of servers such that the fraction of jobs that are delayed in the queue is at most $\varepsilon\in(0,1)$.
Hence we should find
\begin{equation}\label{eq:tagA}
s^{*}_\l(\eps) := \min \left\{s > \l\, |\, C(s,\l) \leq \eps \right\}.
\end{equation}
But alternatively, we can use the QED framework, which says that with $\sl$ as in \eqref{beta}, \\* $\lim_{\l\to\infty} C(\sl,\l) = g(\beta)$ (see Proposition \ref{prop:HalfinWhitt_delay_probability}).
Then \eqref{eq:tagA} can be replaced by
\begin{equation}
s^{\rm QED}_\l(\eps) = \lceil \l + \beta^*(\eps) \sqrt{\l}\rceil,
\end{equation}
where $\beta^*(\eps)$ solves
\begin{equation}
g(\beta^*) = \eps.
\end{equation}
In Figure \ref{fig:MMs_staffing_levels} we plot the exact (optimal) capacity level $s^*_\l(\eps)$ and the heuristically obtained capacity level $s^{\rm QED}_\l(\eps)$ as functions of $\eps$ for several loads $\l$.

\begin{figure}
\centering
\begin{subfigure}{0.32\textwidth}\centering
\begin{tikzpicture}[scale = 0.6]
\Large
\begin{axis}[
	xmin = 0,
	xmax = 1,
	ymin = 10,
	ymax = 19,
	axis line style={->},
	axis lines = left,
	legend cell align = left,
	xlabel = { $\to \eps$},
	ylabel = {},
	yscale = 0.8,
	legend style = {at = {(1,1.2)}, anchor = north east}]
	
\addplot[very thick] file {tikz/Constraint_Satisfaction/lambda10_exact.txt};
\addplot[very thick, dashed, col1] file {tikz/Constraint_Satisfaction/lambda10_asymptotic.txt};
\legend{{$s^*_\l(\eps)$},$s^{\rm QED}_\l(\eps)$}
\end{axis}
\end{tikzpicture}
\caption{$\l=10$}
\end{subfigure}
\begin{subfigure}{0.32\textwidth}\centering
\begin{tikzpicture}[scale = 0.6]
\Large
\begin{axis}[
	xmin = 0,
	xmax = 1,
	ymin = 100,
	ymax = 125,
	axis line style={->},
	axis lines = left,
	legend cell align = left,
	xlabel = {$\to \eps$},
	ylabel = {},
	yscale = 0.8,
	legend style = {at = {(1,1.2)}, anchor = north east}]
	
\addplot[very thick] file {tikz/Constraint_Satisfaction/lambda100_exact.txt};
\addplot[very thick, dashed, col1] file {tikz/Constraint_Satisfaction/lambda100_asymptotic.txt};
\legend{{$s^*_\l(\eps)$},$s^{\rm QED}_\l(\eps)$}
\end{axis}
\end{tikzpicture}
\caption{$\l=100$}
\end{subfigure}
\begin{subfigure}{0.33\textwidth}\centering
\begin{tikzpicture}[scale = 0.6]
\Large
\begin{axis}[
	xmin = 0,
	xmax = 1,
	ymin = 500,
	ymax = 550,
	axis line style={->},
	axis lines = left,
	legend cell align = left,
	xlabel = {$\to \eps$},
	ylabel = {},
	yscale = 0.8,
	legend style = {at = {(1,1.2)}, anchor = north east}]
	
\addplot[very thick] file {tikz/Constraint_Satisfaction/lambda500_exact.txt};
\addplot[very thick, dashed, col1] file {tikz/Constraint_Satisfaction/lambda500_asymptotic.txt};
\legend{{$s^*_\l(\eps)$},$s^{\rm QED}_\l(\eps)$}
\end{axis}
\end{tikzpicture}
\caption{$\l=500$}
\end{subfigure}
\caption{Capacity levels as a function of the delay probability targets $\eps$.}
\label{fig:MMs_staffing_levels}
\end{figure}
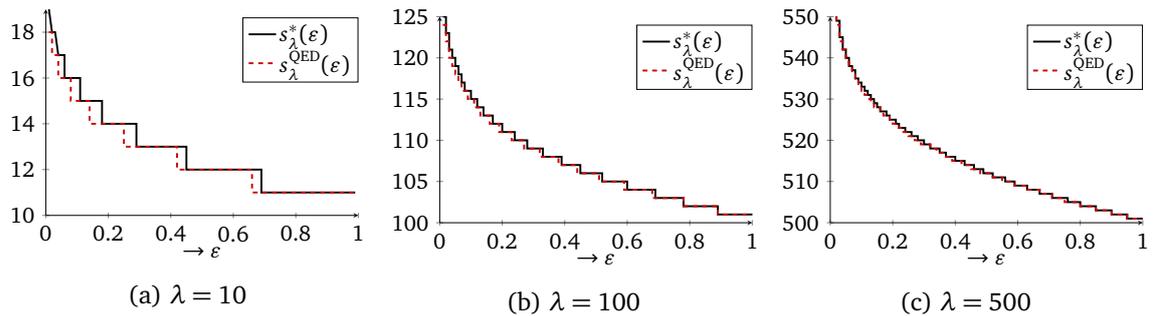

Observe that even for very small values of $\l$, the capacity function $s^{\rm QED}(\eps)$ coincides with the exact solution for almost all $\eps\in(0,1)$ and differs no more than by one server for all $\eps$.
Borst et al.~\cite{Borst2004} recognized this in their numerical experiments too, and \cite{Janssen2011} later confirmed this theoretically (see Section \ref{sec:dim}).
One can easily formulate other constraint satisfaction problems and reformulate them in the QED regime.
For instance, constraints on the mean delay or the tail probability of the duration of delay, e.g.~$\P({\rm delay}>T)$, which are asymptotically approximated by $h(\beta)/\sqrt{\l}$ and $g(\beta)\ee^{-\beta \sqrt{\l} T}$, respectively.
See \cite{Borst2004,Zhang2012,Sanders2014} for more examples.

\subsection{Cost minimization}
\label{sec:intro_optimization}
Alternatively, one can consider optimization problems, for instance to strike the right balance between the capacity allocation costs and delay costs incurred.
More specifically, assume an allocation cost of $a$ per server per unit time, and a penalty cost of $q$ per delayed job per unit time, yielding the total cost function
\[
\bar{K}(s,\l) := a\,s + q\,\l\E[{\rm delay}] = a\,s + q\l\,\frac{C(s,\l)}{s-\l},
\]
see \eqref{eq:MMs_meanwait}, and then ask for the capacity level $s$ that minimizes $\bar{K}(s,\l)$.
Since $s>\l$, we have $\bar{K}(s,\l) > a\,\l$ for all feasible solutions $s$.
Moreover, the minimizing value of $\bar{K}(s,\l)$ is invariant with respect to scalar multiplication of the objective function.
Hence we equivalently seek to optimize
\begin{equation}
\label{eq:optimization_objective}
K(s,\l) = r\,(s-\l) + \frac{\l}{s-\l}\,C(s,\l) \qquad \text{with } r = a/q.
\end{equation}
Denote by $s^*_\l(r) := \arg\min_{s > \l} K(s,\l)$ the true optimal capacity level.
With $\sl = \l + \beta\sqrt{\l}$ and the QED limit in \eqref{eq:halfinwhitt_wait}, we can replace \eqref{eq:optimization_objective} by its asymptotic counterpart:
\begin{align}
\frac{K(\sl,\l)}{\sqrt{\l}} \to r\,\beta + \frac{g(\beta)}{\beta} =: K_*(\beta), \qquad \text{as }\l\to\infty.
\label{eq:total_costs}
\end{align}
We again obtain a limiting objective function that is easier to work with than its exact pre-limit counterpart.
Hence, in the spirit of the asymptotic resource allocation procedure in the previous subsection, we propose the following method to determine the capacity level that minimizes overall costs.
First, (numerically) compute the value $\beta^*(r) = \arg\min_{\beta>0} K_*(\beta)$, which is well-defined, because the function $K_*(\beta)$ is strictly convex for $\beta>0$.
Then, set $s^{\rm QED}_\l(r) = [ \l + \beta^*(r) \sqrt{\l} ]$.
In Figure \ref{fig:MMs_staffing_levels_optimization} we compare the outcomes of this asymptotic resource allocation procedure against the true optima as a function of $r\in(0,\infty)$, for several values of $\l$.
The capacity levels $s^{\rm QED}_\l(r)$ and $s^*_\l(r)$ are aligned for almost all $r$, and differ no more than one server for all instances.

\begin{figure}
\centering
\begin{subfigure}{0.32\textwidth}\centering
\begin{tikzpicture}[scale = 0.6]
\Large
\begin{axis}[
	xmin = 0,
	xmax = 1,
	ymin = 10,
	ymax = 19,
	axis line style={->},
	axis lines = left,
	legend cell align = left,
	xlabel = { $\to \eps$},
	ylabel = {},
	yscale = 0.8,
	legend style = {at = {(1,1.2)}, anchor = north east}]
	
\addplot[very thick] file {tikz/Optimization/lambda10_exact.txt};
\addplot[very thick, dashed, col1] file {tikz/Optimization/lambda10_asymptotic.txt};
\legend{{$s^*_\l(\eps)$},$s^{\rm QED}_\l(\eps)$}
\end{axis}
\end{tikzpicture}
\caption{$\l=10$}
\end{subfigure}
\begin{subfigure}{0.32\textwidth}\centering
\begin{tikzpicture}[scale = 0.6]
\Large
\begin{axis}[
	xmin = 0,
	xmax = 1,
	ymin = 100,
	ymax = 125,
	axis line style={->},
	axis lines = left,
	legend cell align = left,
	xlabel = {$\to \eps$},
	ylabel = {},
	yscale = 0.8,
	legend style = {at = {(1,1.2)}, anchor = north east}]
	
\addplot[very thick] file {tikz/Optimization/lambda100_exact.txt};
\addplot[very thick, dashed, col1] file {tikz/Optimization/lambda100_asymptotic.txt};
\legend{{$s^*_\l(\eps)$},$s^{\rm QED}_\l(\eps)$}
\end{axis}
\end{tikzpicture}
\caption{$\l=100$}
\end{subfigure}
\begin{subfigure}{0.33\textwidth}\centering
\begin{tikzpicture}[scale = 0.6]
\Large
\begin{axis}[
	xmin = 0,
	xmax = 1,
	ymin = 500,
	ymax = 550,
	axis line style={->},
	axis lines = left,
	legend cell align = left,
	xlabel = {$\to \eps$},
	ylabel = {},
	yscale = 0.8,
	legend style = {at = {(1,1.2)}, anchor = north east}]
	
\addplot[very thick] file {tikz/Optimization/lambda500_exact.txt};
\addplot[very thick, dashed, col1] file {tikz/Optimization/lambda500_asymptotic.txt};
\legend{{$s^*_\l(\eps)$},$s^{\rm QED}_\l(\eps)$}
\end{axis}
\end{tikzpicture}
\caption{$\l=500$}
\end{subfigure}
\caption{Optimal capacity levels as a function of $r = a/q$.}
\label{fig:MMs_staffing_levels_optimization}
\end{figure}
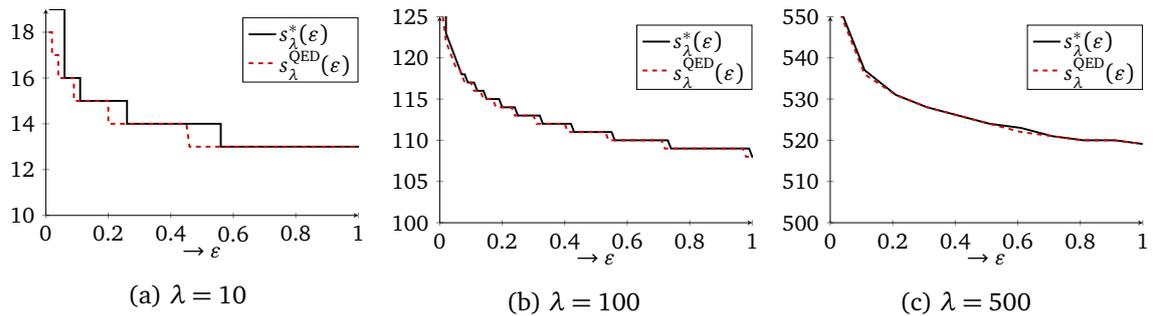

\subsection{{Dynamic rate queues}}


We next discuss how the QED regime also finds application in systems facing a time-varying load.
A time-varying arrival rate $\l(t)$ calls for a time-varying capacity rule $s(t)$.
Again, we shall explain the main ideas through the $M/M/s$ queue, but now its time-varying extension in which jobs arrive according to a non-homogeneous Poisson process with rate function $\l(t)$, a setting typically referred to as the $M_t/M/s_t$ queue.

As in  Section \ref{sec:intro_constraint} we want to set the capacity level $s(t)$ such that the delay probability is at most $\varepsilon\in(0,1)$ for all $t$. The analysis of this time-varying many-server queueing systems is cumbersome and several approximative analysis have been proposed such as the \textit{pointwise-stationary approximation} (PSA) \cite{Green1991}, which evaluates the system at time $t$ as if it were in steady-state with instantaneous parameters $\l=\l(t)$, $\mu$ and $s = s(t)$.
PSA performs well in slowly varying environments with relatively short service times \cite{Green1991,Whitt1991}, but the steady-state approximation becomes less accurate when $\lambda(t)$ displays significant fluctuations; see the numerical experiment at the end of this section.
One reason for this lack of accuracy is that PSA does not account for the jobs that are actually present in the system (being in service or queued), an important piece of real-time information that should be taken into account in capacity allocation decisions.
Jennings et al.~\cite{Jennings1996} introduced an alternative to PSA that exploits the relation with infinite-server queues, facing a non-homogeneous Poisson process with rate $\l(t)$, in which case the number of jobs at time $t$ is Poisson distributed with mean
\begin{equation}
\label{eq:offered_load_eick}
R(t) = \E\left[ \l(t-B)\right] \E[B] = \int_0^\infty \l(t-u)\,\P(B>u)\, {\rm d}u = \int_0^\infty \l(t-u)\, \ee^{-\mu u} \,{\rm d}u,
\end{equation}
where $B$ denotes the processing time of one jobs, in our case an exponentially distributed random variable.
We remark that under general service time assumptions, we should replace $\E[\l(t-B)]$ in \eqref{eq:offered_load_eick} with $\E[\l(t-B_e)]$, where $B_e$ denotes the excess service time \cite{Eick1993}.
Recall that the mean delay in the QED regime is negligible; see \eqref{eq:QoS}.
Hence, the total time in the system is roughly equal to its service time.
Under these conditions, the many-server system can be approximated by the infinite-server approximation with offered load as in \eqref{eq:offered_load_eick}.
Accordingly, we can determine the capacity levels $s(t)$ for each $t$ based on steady-state $M/M/s$ measures with offered load $R=R(t)$.
Jennings et al.~\cite{Jennings1996} proceed by exploiting the heavy-traffic results of Halfin \& Whitt \eqref{eq:halfinwhitt_wait}.
In conjunction with the dimensioning scheme in Section \ref{sec:intro_constraint}, it is proposed in \cite{Jennings1996} to set
\begin{equation}\label{eq:time_varying_s}
s(t) = \big\lceil R(t) + \beta^*(\eps) \sqrt{R(t)} \big\rceil,
\end{equation}
where $\beta^*(\eps)$ solves $g(\beta^*(\eps)) = \eps$.
Remark that the number of servers is rounded up to ensure that the achieved delay probability is indeed below $\eps$.
The time-dependent dimensioning rule in \eqref{eq:time_varying_s} was dubbed in \cite{Jennings1996,Massey1994} the modified offered load (MOL) approximation.
Let us now demonstrate how MOL works for an example with sinusoidal arrival rate function.
Figure \ref{fig:intro_example_arrival_a} shows an arrival rate pattern $\l(t)$ and corresponding offered load function $R(t)$ for $\mu=1/2$.
The resulting time-varying capacity levels based on the PSA and MOL approximations with $\eps = 0.3$ are plotted in
Figure \ref{fig:intro_example_arrival_b}.
Through simulation, we evaluate the delay probability as a function of time for $\eps = 0.1,\, 0.3$ and 0.5.
While the PSA approach fails to stabilize the performance of the queue, the MOL method does stabilize around the target performance, see Figure \ref{fig:time_varying_delay}.
The slightly erratic nature of the delay probability as a function of time can be explained by rounding effects of the capacity level.

\begin{figure}
\centering
\begin{subfigure}{0.32\textwidth}
\centering
\begin{tikzpicture}[scale = 0.66]
\begin{axis}[
	xmin = 0,
	xmax = 24,
	ymin = 0,
	ymax = 55,
	axis line style={->},
	axis lines = left,
	xlabel = {$\to t$},
	yscale = 0.8,
	legend cell align = left,
	legend style = {at = {(axis cs: 23.5,1)},anchor = south east}
	]
	
\addplot[very thick] file {tikz/TimeVarying/arrival_rate.txt};
\addplot[very thick, col1] file {tikz/TimeVarying/offered_load.txt};
\legend{{$\l(t)$},$R(t)$}
\end{axis}
\end{tikzpicture}
\caption{Arrival rate and offered load}
\label{fig:intro_example_arrival_a}
\end{subfigure}
\begin{subfigure}{0.32\textwidth}
\centering
\begin{tikzpicture}[scale = 0.66]
\begin{axis}[
	xmin = 0,
	xmax = 24,
	ymin = 0,
	ymax = 60,
	axis line style={->},
	axis lines = left,
	xlabel = {$\to t$},
	yscale = 0.8,
	legend cell align = left,
	legend style = {at = {(axis cs: 23.5,2)},anchor = south east}
	]
	
\addplot[thick] file {tikz/TimeVarying/s_PSA.txt};
\addplot[thick, col1] file {tikz/TimeVarying/s_Jennings.txt};
\legend{PSA,MOL}
\end{axis}
\end{tikzpicture}
\caption{Capacity levels}
\label{fig:intro_example_arrival_b}
\end{subfigure}
\begin{subfigure}{0.32\textwidth}
\begin{tikzpicture}[scale = 0.66]
\begin{axis}[
	xmin = 0,
	xmax = 24,
	ymin = 0,
	ymax = 1,
	axis line style={->},
	axis lines = left,
	xlabel = {$\to t$},
	ylabel = {},
	yscale = 0.8,
	legend style = {at = {(axis cs: 23.5,0.98)},anchor = north east}]
	
\addplot[thick, black] file {tikz/TimeVarying/pdelay_e01_mol.txt};

\addplot[thick, col1] file {tikz/TimeVarying/pdelay_e03_mol.txt};

\addplot[thick, col4] file {tikz/TimeVarying/pdelay_e05_mol.txt};
\legend{{$\eps=0.1$},{$\eps=0.3$},{$\eps=0.5$}}
\end{axis}
\end{tikzpicture}
\caption{Delay probabilities}
\label{fig:time_varying_delay}
\end{subfigure}
\caption{Time-varying parameters for the example with sinusoidal arrival rate}
\label{fig:intro_example_arrival}
\end{figure}
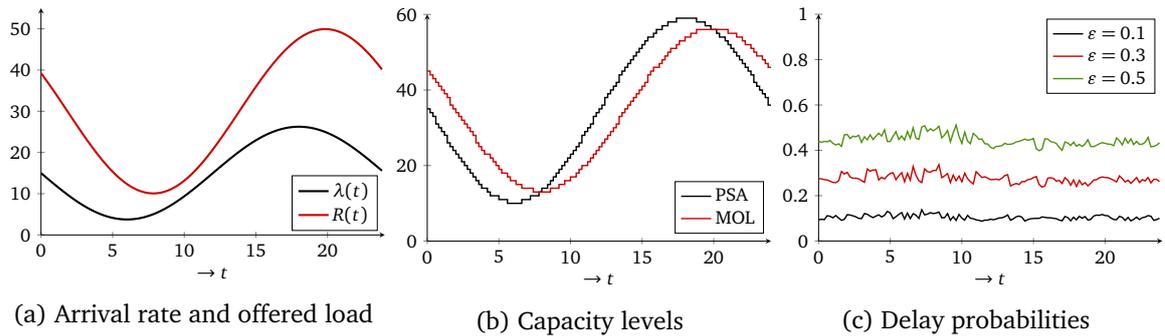

Because time-varying capacity allocation is an issue that recurs in many practical settings, this has been the topic of many works; see e.g.~\cite{Feldman2008,Defraeye2013,Whitt2007,Whitt2013,Liu2012,Liu2014,Liu2017,He2016}. For an accessible overview of queueing-theoretical methods for determining capacity levels under time-varying demand, see Kolesar et al.~\cite{GreenKolesar2007} and references therein. Whitt~\cite{Whitt2017} provides a review of queueing models with time-varying demand.

\section{Convergence rates}\label{sec:refinements}

By now, it is clear that the QED paradigm is based on limit theorems that apply when systems become infinitely large. In practice, even large systems are finite, which makes it important to quantify the error made in approximating a finite system by a limiting object. As it turns out, QED approximations are in many cases highly accurate, already for relatively small or moderately sized systems. In this section we show how to quantify these errors by determining the rate of convergence of certain performance measures to their asymptotic limits.
A first sign of this was seen through the accuracy of the asymptotic dimensioning schemes in Section \ref{sec:dim}.
These convergence rates are typically of order $1/\sqrt{s}$ with $s$ the system size. This again confirms the deep connection with the CLT with a typically error also of order $1/\sqrt{s}$ but then with $s$ the number of random variables in the sum.

\subsection{Bounds}
A convergence rate can also be interpreted as the (main) error made when using the QED limits as approximations for the real performance measures. Whenever we find ways to obtain explicit and precise descriptions of the convergence rates, this can also be used to correct the limiting expression for the finite size of the system. We will also show how such effective corrections can be obtained and applied directly in the QED framework.

Recall that when $\lambda$ is a positive integer, $\Pois(\l)$ can be written as the sum of $\lambda$ $\Pois(1)$ random variables.
A more general version of the CLT in Theorem \ref{thm:CLT} related to the Berry-Ess\'{e}en bound, see e.g.~\cite[Sec.~XVI.5]{feller2}, implies that
\begin{equation}\label{classicclt}
\mathbb{P}(\Pois(\l)\leq \sl)=\Phi(\beta)+O(\lambda^{-1/2}),
\end{equation}
as $\lambda\rightarrow\infty$ with $\sl$ as in \eqref{beta}.
Comparing \eqref{classicclt} with \eqref{eq:first_clt_poisson}, \eqref{classicclt} not only shows convergence of $\P(\Pois(\l)\leq \sl)$ to $\F(\b)$, but also quantifies (roughly) the convergence rate as $O(\l^{-1/2})$.
To obtain better estimates for the error of order $1/\sqrt\l$, one can derive asymptotic expansions. There are various general theorems that yield asymptotic expansions for $\mathbb{P}(A_\lambda\leq s)$ in ascending positive powers of $\lambda^{-1/2}$, see, e.g.~\cite{barndorff,batta,feller2,hwang,johnsonkotzkemp,petrov}. One example would be the Edgeworth expansion, which for the Poisson distribution yields, see \cite[Eq.~(4.18)]{barndorff},
\begin{equation}\label{g234}
\mathbb{P}(\Pois(\l)\leq \sl)=\Phi(\beta)-\frac{\varphi(\beta)(\beta^2-1)}{6\sqrt{
\lambda}}+O(1/\lambda).
\end{equation}

The technical challenge in determining convergence rates is that we need to establish an asymptotic expansion rather than just the limit theorem. We shall demonstrate this for the $M/M/\sl$ queue using the asymptotic evaluation of integrals through the Laplace method.
The formula $C(\sl,\lambda)$ in its basic form is only defined for integer values of $\sl$. An extension of this formula that is well defined for all real $\sl>\lambda$ is given by (see for example Jagers \& Van Doorn (1986))
\begin{equation}
\label{jagersvandoornc}
C(\sl,\lambda)^{-1} = \lambda \int_0^\infty t \e^{-\lambda t} (1+t)^{\sl-1} \dd t.
\end{equation}

We introduce  the following key parameters:
\begin{align}
\alpha &= \sqrt{-2\sl(1-\rho_\l+\ln \rho_l)},\label{alphadef}\\
\beta &= (\sl-\lambda)/\sqrt{\lambda},\\
\gamma &= (\sl-\lambda)/\sqrt{s} = (1-\rho_\l)\sqrt{\sl} = \beta\sqrt{\rho_\l}.
\end{align}
It has been shown in \cite{Janssen2008b} that $\alpha<\beta$. By expanding $\frac 12 \alpha^2$ in powers of $(1-\rho_\l)$, it easily follows that $\gamma<\alpha$, so we have $\gamma<\alpha<\beta$.

\begin{theorem}\label{thmm1}
For $s>\lambda$,
\begin{equation}
\label{simpleupperbound}
C(\sl,\lambda) \leq \left[\rho_l + \gamma \left(\frac{\Phi(\alpha)}{\varphi(\alpha)} + \frac 23 \frac 1{\sqrt{\sl}}\right)\right]^{-1},
\end{equation}
and
\begin{equation}\label{simplelowerbound}
C(\sl,\lambda) \geq \left[\rho_\l + \gamma \left(\frac{\Phi(\alpha)}{\varphi(\alpha)} + \frac 23 \frac 1{\sqrt{\sl}} +  \frac 1{\varphi(\alpha)} \frac 1{12\sl-1} \right)\right]^{-1}.
\end{equation}
\end{theorem}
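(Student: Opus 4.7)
The plan is to convert the Jagers--Van Doorn integral representation \eqref{jagersvandoornc} into a Gaussian integral via a Laplace-type substitution and then analyze the Jacobian in closed form. Set $\phi(t) := \rho_\lambda t - \ln(1+t)$; this convex function attains its unique minimum at $t^{*} := (1-\rho_\lambda)/\rho_\lambda$ with $\phi(t^{*}) = -\alpha^{2}/(2s)$. The substitution $v = \mathrm{sgn}(t-t^{*})\sqrt{2s[\phi(t)-\phi(t^{*})]}$ defines a bijection from $t\in[0,\infty)$ onto $v\in[-\alpha,\infty)$ satisfying $e^{-s\phi(t)} = e^{\alpha^{2}/2}e^{-v^{2}/2}$. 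Because $v\,dv = s\phi'(t)\,dt = \lambda(t-t^{*})/(1+t)\,dt$, the Jacobian collapses neatly to $\lambda\tfrac{t}{1+t}\tfrac{dt}{dv} = \tfrac{tv}{t-t^{*}}$, giving
\begin{equation*}
C(s_\lambda,\lambda)^{-1} \;=\; e^{\alpha^{2}/2}\int_{-\alpha}^{\infty}\frac{tv}{t-t^{*}}\,e^{-v^{2}/2}\,dv.
\end{equation*}

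I would then split $tv/(t-t^{*}) = \rho_{\lambda}v + (1-\rho_{\lambda})\psi(v)$ with $\psi(v) := (1+t(v))\,v/(t(v)-t^{*})$, the removable singularity filled in as $\psi(0)=\sqrt{s}$ by L'Hopital. Using $\int_{-\alpha}^{\infty}v\,e^{-v^{2}/2}\,dv = e^{-\alpha^{2}/2}$, $\int_{-\alpha}^{\infty}e^{-v^{2}/2}\,dv = e^{-\alpha^{2}/2}\Phi(\alpha)/\varphi(\alpha)$, and $(1-\rho_{\lambda})\sqrt{s} = \gamma$, these identities extract the main terms:
\begin{equation*}
C(s_\lambda,\lambda)^{-1} \;=\; \rho_{\lambda} + \gamma\,\frac{\Phi(\alpha)}{\varphi(\alpha)} + (1-\rho_{\lambda})\,e^{\alpha^{2}/2}\int_{-\alpha}^{\infty}[\psi(v)-\sqrt{s}]\,e^{-v^{2}/2}\,dv.
\end{equation*}
Comparing with \eqref{simpleupperbound}--\eqref{simplelowerbound}, the theorem reduces to the two-sided integral bound
\begin{equation*}
\tfrac{2}{3}\,e^{-\alpha^{2}/2} \;\leq\; \int_{-\alpha}^{\infty}[\psi(v)-\sqrt{s}]\,e^{-v^{2}/2}\,dv \;\leq\; \tfrac{2}{3}\,e^{-\alpha^{2}/2} + \frac{\sqrt{2\pi s}}{12s-1}.
\end{equation*}

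The local inversion $\rho_{\lambda}(t-t^{*}) = v/\sqrt{s} + v^{2}/(3s) + v^{3}/(36\,s^{3/2}) + \cdots$ produces $\psi(v)-\sqrt{s} = (2/3)v + v^{2}/(12\sqrt{s}) + O(v^{3}/s)$ near $v=0$. The linear piece integrates to exactly $(2/3)e^{-\alpha^{2}/2}$, accounting for the extra $2\gamma/(3\sqrt{s})$ in both bounds, and the quadratic piece contributes $\approx \sqrt{2\pi}/(12\sqrt{s})$, matching the leading asymptotic of $\sqrt{2\pi s}/(12s-1)$. The principal obstacle is promoting this heuristic to rigorous non-asymptotic inequalities: the function $\psi(v)-\sqrt{s}-(2/3)v$ changes sign on $[-\alpha,\infty)$ (one checks it is negative at $v=-\alpha$ but positive near $0$), so the lower bound cannot be obtained by pointwise comparison. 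The natural route is to rewrite this residual as $\sqrt{s}\,[R(y)(1+y/3)-1]$ with $y:=\rho_{\lambda}(t-t^{*})$ and $R(y):=\sqrt{2(y-\ln(1+y))/y^{2}}$, exploit the identity $dv/dy = \sqrt{s}/[(1+y)R(y)]$ (obtained from differentiating $R^{2}$) to recast the integral in $y$-coordinates against the Poisson-type weight $(1+y)^{-1}e^{-s[y-\ln(1+y)]}\,dy$, and then invoke Robbins' sharp Stirling bounds $1/(12s+1)<\ln[s!/(\sqrt{2\pi s}(s/e)^{s})]<1/(12s)$. The peculiar constant $1/(12s-1)$ appearing in \eqref{simplelowerbound} is the fingerprint of precisely such Stirling-sharp estimates applied to the incomplete-Gamma form of $B(s,\lambda)$ underlying the Erlang-C complement.
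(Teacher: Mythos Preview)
The paper is a survey and does \emph{not} prove Theorem~\ref{thmm1}; it only sets up the Jagers--Van Doorn integral \eqref{jagersvandoornc} and the parameters $\alpha,\beta,\gamma$, and then states the bounds with a pointer to Janssen, van Leeuwaarden \& Zwart \cite{Janssen2008b,Janssen2011} for the actual argument. So there is no in-paper proof to compare against.

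Your Laplace substitution is exactly the approach of the cited references, and your algebra is correct: the change of variable $v=\mathrm{sgn}(t-t^\ast)\sqrt{2s[\phi(t)-\phi(t^\ast)]}$ does produce
\[
C(s_\lambda,\lambda)^{-1}=\rho_\lambda+\gamma\,\frac{\Phi(\alpha)}{\varphi(\alpha)}+(1-\rho_\lambda)\,e^{\alpha^2/2}\int_{-\alpha}^\infty[\psi(v)-\sqrt{s}]\,e^{-v^2/2}\,dv,
\]
and the theorem is indeed equivalent to the two-sided bound you wrote for the residual integral. Your local expansion $\psi(v)-\sqrt{s}=\tfrac{2}{3}v+\tfrac{v^2}{12\sqrt{s}}+\cdots$ is also the right heuristic for why the constants $\tfrac{2}{3}$ and $\tfrac{1}{12s-1}$ appear.

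The gap is that you stop precisely where the real work begins. You correctly diagnose that $\psi(v)-\sqrt{s}-\tfrac{2}{3}v$ changes sign on $[-\alpha,\infty)$, so neither inequality follows from a pointwise estimate, and you then \emph{propose} a route (pass to $y=\rho_\lambda(t-t^\ast)$, rewrite against a Poisson-type weight, call Robbins' Stirling inequality) without executing it. That sketch is too loose to count as a proof: you have not shown that the sign-changing residual integrates to something non-negative (the lower bound), nor that its positive part is dominated by $\sqrt{2\pi s}/(12s-1)$ (the upper bound). The Stirling fingerprint you spot in $1/(12s-1)$ is real, but invoking Robbins requires first relating your residual integral back to an exact factorial/incomplete-Gamma quantity, and that bridge---the analytic inequality for $\psi$ globally on $[-\alpha,\infty)$, not just its Taylor jet at $0$---is precisely the content of \cite{Janssen2008b}. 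As it stands, your proposal is a correct reduction plus an unproven conjecture about the residual.
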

Notice that the structure of the bounds (\ref{simpleupperbound}) and (\ref{simplelowerbound}) is quite similar to the Halfin-Whitt approximation $C(\sl,\l) \approx g(\b)$.
Indeed, using $\sl = \l + \b\sqrt\l$ with $\b$ fixed and letting $\l\to\infty$, one can see that $\alpha$ and $\gamma$ both converge to $\beta$.
With the above theorem at hand, convergence of $C(\sl,\lambda)$ towards the Halfin-Whitt function $g(\beta)$ is follows, which provides an alternative proof and confirmation of Proposition \ref{prop:HalfinWhitt_delay_probability}.
More importantly, the bounds \eqref{simpleupperbound}--\eqref{simplelowerbound} are sharp in the QED regime for small and moderate-size systems. The difference between the lower and upper bound is only $O(1/\sl)$
In Table 1, we keep $\beta=1$ fixed and vary $\sl$. The load $\lambda$ is chosen such that $\sl=\lambda+\beta\sqrt{\lambda}$. The quality of the bounds is apparent, even for small systems, and certainly compared to the asymptotic approximation $g(1) = 0.22336$.

\begin{table}[h]
\begin{center}
\begin{tabular}{rrccccccc}
$\sl$   & $\lambda$   & $\alpha$  &  \eqref{simplelowerbound} & $C(\sl,\lambda)$    & \eqref{simpleupperbound} & $\frac{\eqref{simpleupperbound}-\eqref{simplelowerbound}}{C(\sl,\lambda)}$ & \eqref{eq:delay_refinement} & $\frac{|\eqref{eq:delay_refinement} - C(\sl,\l)|}{C(\sl,\l)}$  \\ \hline
    1    &   0.382  &   0.830  &   0.36571  &   0.38197  &   0.39437 &   7.504$\cdot$ $10^{-2}$ & 0.45085 & 1.803 $\cdot$ $10^{-1}$ \\
    2    &   1.000  &   0.879     &   0.32678  &  0.33333  &   0.33936  &   3.772$\cdot$ $10^{-2}$ & 0.36395 & 0.918 $\cdot$ $10^{-2}$ \\
    5    &   3.209  &   0.924  &   0.28886  &   0.29097  &   0.29328  &   1.518$\cdot$ $10^{-2}$ &
   0.30185 & 3.739 $\cdot\ 10^{-2}$ \\
   10    &   7.298  &   0.946  &   0.26937  &   0.27030  &   0.27142  &   7.616$\cdot$ $10^{-3}$ & 0.27540 & 1.886 $\cdot\ 10^{-2}$ \\
   20    &   16.000   &  0.962  &   0.25565 &   0.25608  &   0.25663  &   3.818$\cdot$ $10^{-3}$ & 0.25851 & 9.495 $\cdot\ 10^{-3}$ \\
   50    &   43.411  &   0.976  &   0.24361  &  0.24377 &   0.24398  &   1.531$\cdot$ $10^{-3}$ &
 0.24470 & 3.820 $\cdot\ 10^{-3}$  \\
  100    &   90.488   &   0.983  &   0.23761  &   0.23769  &   0.23779  &   7.665$\cdot$ $10^{-4}$ & 0.23814 & 1.916 $\cdot\ 10^{-4}$ \\
  200    &   186.349   &   0.988  &   0.23340  &  0.23344 &   0.23349  &   3.836$\cdot$ $10^{-4}$& 0.23366 & 9.602 $\cdot\ 10^{-4}$\\
  500    &   478.134   &   0.993  &   0.22969  &   0.22970  &   0.22972  &   1.536$\cdot$ $10^{-4}$ & 0.22979 & 3.848 $\cdot\ 10^{-4}$ \\
 1000    &   968.873   &   0.995  &   0.22783  &   0.22783  &   0.22784  &   7.683$\cdot$ $10^{-5}$ & 0.22788 & 1.926 $\cdot\ 10^{-4}$\\
\end{tabular}
\end{center}
\caption{Results for the bounds on $C(\sl,\lambda)$ for $\beta=1$.}
\end{table}

We by now know that $C(\sl,\l)\to g(\b)$ and D'Auria~\cite{DAuria2012} proved that $C(\sl,\l) \geq g(\beta)$ for all $\l,\beta>0$.
Using the bounds in \eqref{simpleupperbound} and \eqref{simplelowerbound}, it was shown by Janssen et al.~\cite{Janssen2011} that as $\lambda\rightarrow\infty$,
\begin{equation}
C(\sl,\l) \approx g(\beta) + g_{\bullet}(\beta) \frac \beta{\sqrt{\lambda}},\label{eq:delay_refinement}
\end{equation}
with
\begin{equation}
g_{\bullet}(\beta) = g(\beta)^2 \left[\frac{1}{3} +  \frac{\beta^2}{6} + \frac{\Phi(\beta)}{\phi(\beta)}\left(\frac{\beta}2 + \frac{\beta^3}{6}\right)\right].
\end{equation}
This result can be interpreted as the counterpart of \eqref{g234}, but then not for the Poisson distribution in the CLT regime, but for the delay probability in the QED regime.
In Table 1 we see that \eqref{eq:delay_refinement} leads to much sharp approximations than the original asymptotic approximation $g(1) = 0.22336$.


\subsection{Optimality gaps}
Given these refinements to the asymptotic delay probability, we revisit the cost minimization problem discussed in Section \ref{sec:dim}, and  ask ourselves what can be said about the associated optimality gaps when dimensioning principles based on the asymptotic approximations are used.

Recall that under linear cost structure, we aim to find the minimizing value $s^*_\l$ of $K(s,\l)$ as in \eqref{eq:optimization_objective} (we omit the argument $r$ in this section for brevity).
Since $K(\sl,\l) \to K_*(\beta)$ as $\l\to\infty$ with $\sl = \l+\beta\sqrt{\l}$,
we alternatively considered asymptotic minimizer $s^{\rm QED}_\l = [\lambda + \beta^*\sqrt{\l}]$ with $\beta^*$ minimizing $K_*(\beta)$, and
Figure \ref{fig:MMs_staffing_levels_optimization} illustrated the accuracy of this asymptotic dimensioning scheme of systems of various sizes.
Indeed, Borst et al.~\cite{Borst2004} showed that $s^{\rm QED}_\l$ is asymptotically optimal in the sense that
\begin{equation}
K\big(s^{\rm QED}_\l,\l\big) = K( s_\l^*, \l) + o(\sqrt{\l}).
\end{equation}
The corrected approximation for the delay probability in \eqref{eq:delay_refinement}, however, provides a means to improve the accuracy of $s^{\rm QED}_\l$.
Namely, by substituting \eqref{eq:delay_refinement} into \eqref{eq:optimization_objective}, it is clear that we can write
\begin{equation}\label{eq:cost_approx}
\frac{K(\sl,\l)}{\sqrt{\l}} \approx K_*(\beta) + \frac{g_\bullet(\beta)}{\sqrt{\l}} =:K_\bullet(\beta) ,
\end{equation}
with an error that is of order $O(1/\l)$ for uniformly bounded $\beta$ and $g_\bullet(\beta) := g_*(\beta)/\beta$.
If we consider the approximated cost function $K_\bullet(\beta)$ in \eqref{eq:cost_approx}, and let $\beta^*_\l$ be the associated minimizer, then we expect the refined square-root rule $s_\l^\bullet := [\l+\beta^*_\l \sqrt{\l}]$ to give a better approximation to the true optimizer $s_\l^*$.
It is shown in Janssen et al.~\cite{Janssen2011}, by invoking Taylor's theorem, that $\beta^*_\l = \beta^* + \beta_\bullet / \sqrt{\l} + O(1/\l)$ with
\begin{equation}
\beta_\bullet = {-}\,\frac{ \beta^* g_\bullet'(\beta^*)}{ K_*''(\beta^*) + 2r }.
\end{equation}
The resulting refined square-root rule
$s^\bullet_\l = [s_\l^{\rm QED} + \beta_\bullet]$
indeed yields an improvement over the original square-root rule in terms of the optimality gap. Namely, see \cite[Thm.~2]{Janssen2011},
\begin{equation}
K(s^\bullet_\l,\l) = K(s^*_\l,\l) + O(1/\sqrt{\l}).
\end{equation}
Observe that the characterization of $s^\bullet_\l$ as an $O(1)$ correction to the original square-root rule \eqref{beta} provides a rigorous mathematical underpinning for the exceptionally good performance of the QED dimensioning scheme observed in Figure \ref{fig:MMs_staffing_levels_optimization}.

In the context of $M/M/s+M$ queues, Zhang et al.~\cite{Zhang2012} obtained similar results on optimality gaps.
Motivated by the results in \cite{Janssen2011,Zhang2012}, Randhawa \cite{Randhawa2014} takes a more abstract approach to quantify optimality gaps of asymptotic optimization problems.
He shows under generally assumptions that when the approximation to the objective function is accurate up to $O(1)$, the prescriptions that are derived from this approximation are $o(1)$-optimal.
The optimality gap thus asymptotically becomes zero.
This general setup is shown in \cite{Randhawa2014} to apply to the $M/M/s$ queues in the QED regime, which confirmed and sharpened the results on the optimality gaps in \cite{Janssen2011,Zhang2012}.
The abstract framework in \cite{Randhawa2014}, however, can only be applied if refined approximations as discussed above are available. Optimality gaps in settings with admission control in the QED regime, based on a trade-off between revenue, costs and service quality, have  been studied  in \cite{Sanders2016}.

\subsection{Refinements}

A downside of heavy-traffic analysis is that the results are of an asymptotic nature, and therefore approximations. Obtaining corrections or refinements is one of the main goals of many research efforts, and the demonstration in the previous two subsections, is only a small part of a richer and active line of research. In the QED context, this leads to the question of whether the three universal properties provide the correct insight if the system size
is only moderate, or if the efficiency hypothesis (\ref{eq:efficiency}) is not exactly satisfied.

In the setting of a single server, Siegmund \cite{siegmund} proposed a corrected diffusion approximation for the waiting time. In heavy traffic, its distribution is approximately exponential. Siegmund gave a precise estimate of the correction term, nowadays a classical result and textbook material, cf. \cite[p.~369]{Asmussen2003}.
Siegmund's first order correction has been extended by Blanchet \& Glynn \cite{Blanchet2006}, who give a full series expansion for the $G/G/1$ waiting time distribution in heavy traffic.
A result similar to \cite{siegmund} has been presented in the QED context for the $M/M/s$ queue in Section \ref{sec:refinements} of this survey.
A common threat of these approaches is that detailed information on the pre-limit distribution needs to be available.

In addition to corrected diffusion approximations, a number of other refinements exist in the literature that provide improved (w.r.t.~the heavy-traffic limit) approximation of the invariant distribution. One class of such approximations is based on variations of Stein's method \cite{Braverman2015,Braverman2016}. Another class of approximations is based on the idea to consider the diffusion limit of a Markovian queue, and to replace the drift and diffusion coefficients by terms that depend on the parameters in the pre-limit. The goal is to improve the convergence rate in the QED regime and to make the approximations accurate in other scaling regimes, hence the term {\em universal approximations}. We refer to \cite{Gurvich2014,Gurvich2014a,Huang2016} for a more in-depth discussion, and explain the idea of modifying a diffusion in the context of the Halfin-Whitt diffusion, following an idea of Dai \& Braverman~\cite{DaiBraverman}.

Recall from Theorem \ref{thm:Halfin_Whitt_diffusion} that the scaled queue length process in the QED regime converges to a diffusion process with infinitesimal drift $m(x) = -\beta -x \mathbbm{1}_{\{x\leq 0\}}$ and infinitesimal variance $\sigma^2(x)=2$.
$\beta$ can be expressed in terms of the pre-limit characteristics by the expression $\beta = (s-\lambda)/\sqrt{\lambda}$.
The idea in \cite{DaiBraverman} is now to replace the diffusion coefficient and consider \begin{equation}
\sigma_\lambda^2(x) = 1+ \mathbbm{1}_{\{x> -\sqrt{\lambda}\}} \left(1-\frac{m(x)}{\sqrt{\lambda}}\right).
\end{equation}
The resulting approximation for the steady-state density is explicit and it is shown in \cite{DaiBraverman} that the resulting distributional approximation has an error of the order $1/\lambda$, while the QED approximation has a much larger error of order $1/\sqrt{\lambda}$. Though the associated approximation for the delay probability is worse than the approximations and bounds presented in Section \ref{sec:refinements} of this paper, the idea of modifying the limiting diffusion appropriately seems to be of high potential, and worthy of futher investigation. The same can be said about Stein's method. Another line of research we think deserves
attention is to develop non-asymptotic bounds that are accurate in a QED setting. Recent work in this direction is \cite{GoldbergLi2017}.

\section{Extensions}\label{extsec}

By now we should have developed a good understanding for why the mathematical theory that comes with the QED regime for many-server systems ranks among the most celebrated principles in applied probability.
The goal of the present section is to provide a survey of results for models that are more elaborate than the basic models discussed so far. In particular, we shall consider more elaborate models that incorporate various forms of user behavior (such as abandonments and strategic behavior), and consider the impact of blocking in systems with finite waiting rooms, as well as loss networks. We also consider parameter uncertainty, systems with load balancing, and non-Markov G/G/1 systems. 
This list of extensions is far from exhaustive, but gives the reader a taste of how the fundamental QED principles given in Section 3 carry over of should be adapted in more elaborate settings.  
For multi-class job types we refer to~\cite{Harrison2004,Armony2004,Atar2014,Gurvich2008,Gurvich2009,Tezcan2010,maglaraszeevi,Atar2012,Atar2016}. Extensions to heterogeneous servers can be found in~\cite{Armony2005,Armony2010,Mandelbaum2012b,Stolyar2010},  and congestion control mechanisms are considered in ~\cite{Sanders2014,Kocaga2015,Atar2005,Atar2016,Atar2012,Borgs2014,Gurvich2009,Janssen2013}.

\subsection{Abandonments}

So far, we have surveyed standard systems in which
all arriving jobs join the queue and stay until eventually being processed by one of the servers.
One model extension that is featured prominently in the literature is abandonment caused by customer impatience, in which case customers leave the system without being served \cite{Gans2003,Brown2005,Palm1957}.

The canonical model for abandonments is the $M/M/s+M$ or Erlang A model \cite{Palm1957,Garnett2002}, with similar dynamics as the $M/M/s$ queue, with the additional feature that each job is assigned an i.i.d.~patience time, which is exponentially distributed with mean $1/\theta$.
If a job's patience time expires before reaching an available server, the job leaves (abandons) the system.
As the number of jobs in the Erlang A queue remains a birth-death process, its stationary distribution and associated performance measures are fairly well-understood, also in the QED regime \cite{Garnett2002,Zeltyn2005,Zhang2013}.
Garnett et al.~\cite{Garnett2002} and Zeltyn \& Mandelbaum~\cite{Zeltyn2005} showed that in the QED regime, with $\sl = \l + \beta\sqrt{\l}$ and $\l\to\infty$,
\begin{align}
\P({\rm delay}) &\to \left( 1 + \sqrt{\theta}\,\frac{k(\beta/\sqrt{\theta})}{k({-}\beta)}\right)^{-1}
\end{align}
and
\begin{align}
\sqrt{\l}\,\P({\rm abandon}) &\to
\frac{ \sqrt{\theta}\,k(\beta/\sqrt{\theta})- \beta }
{ 1 + \sqrt{\theta}\,k(\beta/\sqrt{\theta})/k({-}\beta)},
\end{align}
where $k(\beta) = \f(\beta)/\F({-}\beta)$.
Hence, the universal QED properties, discussed in Section \ref{sec:properties}, remain intact when the model includes abandonments.
Moreover, the probability that a job abandons vanishes at rate $O(1/\sqrt{\l})$ as $\l\to\infty$.
In \cite{Zeltyn2005}, the stationary QED limits for more generally distributed patience time were derived, for which similar limiting behavior is proved.
More surprisingly, it is shown that the limit is insensitive to the patience
time distribution as long as its density at 0, i.e. the probability of abandoning immediately upon arrival, is fixed.
On the process level, the appropriately scaled queue length process of the $M/M/\sl+M$ model in the QED regime can be shown to converge to a piecewise-linear Ornstein--Uhlenbeck process with drift terms
\begin{equation}
m(x) = \left\{
\begin{array}{ll}
-\beta-\theta x, & \text{if }x> 0,\\
-\beta-x, & \text{if } x \leq 0
\end{array}\right.
\label{2222}
\end{equation}
and infinitesimal variance $\sigma^2(x) = 2$, see e.g.~\cite{Garnett2002}.
Notice that for $\theta = 0$, we retrieve the Halfin-Whitt diffusion in Theorem \ref{thm:Halfin_Whitt_diffusion}.
Under more general assumptions, \cite{Mandelbaum2012a} characterizes the QED limiting process for the $G/GI/s+GI$ queue.
More specifically, they find that the QED limit of the $G/M/s+GI$ queue is still a piecewise-linear Ornstein-Uhlenbeck process.

The general $G/G/s+G$ queue under various modeling assumptions and its limiting process in the QED regime has been studied in \cite{Garnett2002,Gans2003,Whitt2006,Mandelbaum2009,Zeltyn2005,Mandelbaum2012a,Kang2012,Dai2010,Reed2012,Jennings2012,Zhang2013}.
These works also include the case where the system is balanced from the point of view of the abandonment probability, which related to the efficiency driven regime in our setting.
Surveys on systems with abandonments are Ward \cite{Ward2012} and Dai \& He \cite{Dai2012}.

\subsection{Finite waiting space}
We have assumed so far that systems have infinite buffers for storing delayed jobs.
Systems in applications such as data centers and hospitals, however, are often limited in the number of jobs that can be held simultaneously.
Depending on the practical setting and admission policy, if the maximum capacity, say $k$, is reached, newly arriving jobs can either leave the system immediately (blocking), reattempt  getting access later (retrials) or queue outside the facility (holding).
In any case, expectations are that the queueing dynamics within the resource sharing facility are affected considerably in the presence of such additional capacity constraints.

We illustrate these implications through the $M/M/s/k$ queue, that is, the standard $M/M/s$ queue with additional property that a job that finds upon arrival $k$ jobs already present in the system is blocked/lost.
To avoid trivialities, let $k\geq s$.
Since the mean workload reaching the servers is less than in an finite buffer $(k=\infty)$ scenario, one expects less congestion and resource utilization.

Consider the $M/M/\sl/k_\l$ in the QED regime.
So, let $\l$ increase while $\sl$ scales as in \eqref{beta}.
We then ask how $k_\l$ should scale along with $\l$ and $\sl$ to maintain the non-degenerate behavior as seen in Section \ref{ss:ex1}.
We provide a heuristic answer.
Let $Q^{(\sl,k_\l)}$ and $W^{(\sl,k_\l)}$ denote the number of jobs in the system and delay in the $M/M/\sl/k_\l$ queue in steady state.
If there were no finite-size constraints, then through \eqref{eq:diff_1}--\eqref{eq:diff:3}, we find as $\l\to\infty$
\begin{equation}
\P( Q^{(\sl)} \geq k_\l ) = \P\left( \frac{Q^{(\sl)}-\sl}{\sqrt\sl} \geq \frac{k_\l-\sl}{\sqrt\sl}\right) \to g(\b)\, \ee^{-\b \gamma},
\end{equation}
where $\gamma = \lim_{\l\to\infty} (k_\l-\sl)/\sqrt{\sl}$.
Hence, asymptotically the finite-size effects only play a role if the extra variability hedge of $k_\l$ is of order $\sqrt{\sl}$ (or equivalently $o(\sqrt{\l})$).
Furthermore, if the variability hedge is $o(\sqrt{\l})$, then we argue that asymptotically, all jobs that do enter the system have probability of delay equal to zero.
More formally, under the \textit{two-fold scaling rule}
\begin{equation}
\label{eq:intro_twofold_scaling_rule}
\left\{
\begin{array}{ll}
\sl = \l + \beta\sqrt{\l} + o(\sqrt{\l}),\\
k_\l = \sl + \gamma \sqrt{\sl} + o(\sqrt{\l}),
\end{array}
\right.
\end{equation}
it is not difficult to deduce that, see e.g.~\cite{masseywallace},
\begin{equation}
\P({\rm delay}) \to \left( 1 + \frac{\beta\,\F(\beta)}{(1-\ee^{-\beta\gamma})\f(\beta)}\right)^{-1}, \quad \text{as } \l\to\infty,
\end{equation}
which is strictly smaller than $g(\beta)$ in \eqref{fig:delay_probs_HW_MMs}, but still bounded away from both 0 and 1 and thus the balance property holds.
Furthermore, the buffer size of the queue is $n_\l-\sl = \gamma\sqrt{\sl}$, so that by Little's law, the mean delay of an admitted job is $O(1/\sqrt{\sl})$, implying that the QoS property holds.
Even though resource utilization in the $M/M/\sl/n_\l$ is less efficient than in the queue with unlimited waiting space, it can be shown that $\rho_\l\to 1$ as $\l\to\infty$.
Hence, all three key characteristics of the QED regime are carried over to the finite-size setting if one uses \eqref{eq:intro_twofold_scaling_rule}.

On a process level, adding a capacity constraint translates to adding a reflection barrier to the normalized queue length process $\bar{Q}^{(\sl,n_\l)} = (Q^{(\sl,n_\l)} -\sl ) /\sqrt{\sl}$, at $\gamma$, as is illustrated by the sample paths of $\bar{Q}^{\sl,n_\l}$ for three values of $\l$ in Figure \ref{fig:sample_paths_MMsn}.
\begin{figure}
\centering
\begin{subfigure}{0.32\textwidth}
\centering
\includegraphics[scale=0.8]{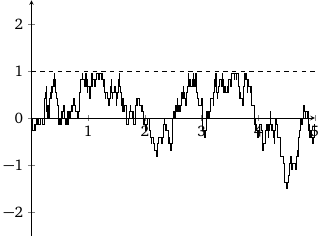}
\caption{$\l = 50$}
\end{subfigure}
\begin{subfigure}{0.32\textwidth}\centering
\includegraphics[scale=0.8]{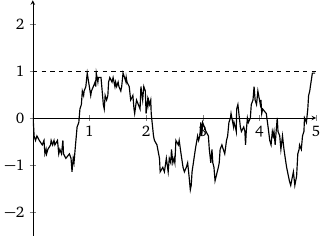}
\caption{$\l=100$}
\end{subfigure}
\begin{subfigure}{0.32\textwidth}\centering
\includegraphics[scale=0.8]{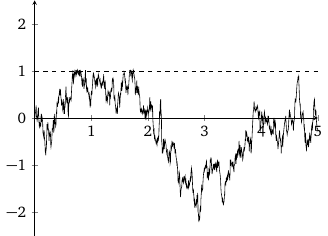}
\caption{$\l=500$}
\end{subfigure}
\caption{Sample paths of the normalized queue length process $\bar{Q}^{(\sl,n_\l)}(t)$ with $\l = 50,\, 100$ and $500$ under scaling \eqref{eq:intro_twofold_scaling_rule} with $\beta=0.5$ and $\gamma = 1$.}
\label{fig:sample_paths_MMsn}
\end{figure}
Indeed, non-degenerate limiting behavior can be expected when the additional space $\g\sqrt{\sl}$ is of the same order as the natural fluctuations of the arrival process; see~\cite{masseywallace}.
\chp{Similar two-fold scalings for queues with a more general class of service times are considered by Whitt~\cite{Whitt2005}.}

\subsection{Strategic behavior}

The purpose of this section is to show that the universal QED properties may no longer hold when there is strategic interaction between the system operator and potential users.
In several applications users have the option to join a certain congestion dependent service or not, leading to a game theoretic setting where the provider of a service maximizes profit, and users decide to join a service depending on their utility, possibly involving the mean delay. If the market size is large, the QED capacity allocation rule can emerge endogenously, though it is possible to obtain other scaling rules as well.
Examples of such studies include \cite{KumarRandhawa2010,WardWierman2016,NairWiermanZwart2016}. For illustrative purposes, we briefly describe the model and results of Nair et al.~\cite{NairWiermanZwart2016} in more detail.

A user needs to decide whether or not to use a congestion-dependent service which is free for the user (and supported by advertisements, think of Google or Facebook). If the total user base that uses the service has magnitude $\lambda$, the user receives a utility $V(\lambda)$ (this may be increasing with $\lambda$ in a social network context), and a congestion-dependent dis-utility $\xi(s,\l)$, chosen according to the mean delay in the $M/M/s$ queue, i.e. $\xi(s,\l) = C(s,\l)/(s-\lambda)$ for $\lambda<s$ and $\infty$ otherwise. Given the choice of a number of data processing units of the service provider, an infinitesimal user will join if and only if $V(\lambda) - \xi(s,\l)$ is non-negative. The total market size of the user base is equal to $\Lambda$, which is assumed to be large. For illustrative purposes, we restrict to the case where the entire user population can cooperate and therefore the total arrival rate becomes
\begin{equation}
\hat \lambda_\Lambda(s) = \max \Big\{ \arg\max_{\lambda\in [0,\Lambda]} [ \lambda V(\lambda) - \lambda \xi(s,\l)]  \Big\}
\end{equation}

The firm optimizes its revenue given this user behavior. The cost of each resource is scaled to $1$, and the average advertisement revenue per unit of users is set to $b_1$. In this case the optimal number of services $k^*(\Lambda)$ becomes
 \begin{equation}
k_\Lambda^* = \max \Big\{ \arg\max_{k\geq 0} [b_1 \hat \lambda_\Lambda (k) - k] \Big\}.
\end{equation}
It is possible to determine how $k_\Lambda^*$ scales with $\Lambda$. As is shown in Theorem 1 of \cite{NairWiermanZwart2016}, if $\alpha = \lim_{\lambda\rightarrow\infty} U'(\lambda) \in (0,\infty)$ (which is the case if $V$ is converging to a constant, corresponding with an online service like Google), then there exists a strictly positive and decreasing function $\beta$ of $\alpha$ such that $k_\Lambda^* = \Lambda + \sqrt{\beta (\alpha)\Lambda}\,(1+o(1))$.
In the case $V(\lambda) = \lambda^v$ for some $v>0$, then $\alpha=\infty$ and users are more interested to join a service if other users are present (as is the case in a social network like Facebook). In this case, the firm can give less QoS: the number of spare servers becomes of the order $\sqrt{\Lambda^{1-c}}$.
If users cannot collaborate, the firm only needs two spare servers to maximize its profit: the choice $k^*_\Lambda = \Lambda+2$ makes the entire user population join the network. This is an example of what is called a {\em tragedy of the commons}. There are many additional opportunities for research in this domain; the recent monograph \cite{Hassinnewbook} on the interface of game theory and queueing provides an excellent starting point.
{Another interesting research line is the analysis of the effect of delay announcements on consumer behavior in a many server setting, see \cite{Armony2009, Huang2017}.
 A recent survey on this topic is \cite{Ibrahim2018}.}

\subsection{Networks}
The models shown so far all are all single-station models. The analysis of networks in the QED regime is more challenging; {see e.g.~\cite{mandelbaum1998strong} for a QED analysis of Markovian networks with time-varying rates and time-varying number of servers, using the technique of strong approximations to obtain functional functional central limit theorems. Parallel service systems with multiple
service pools and multiple customer classes can also be viewed as network extensions of single stations; see 
\cite{Atar2005} for a diffusion control problem and 
\cite{Gurvich2009} for a strong approximation approach. In both papers the QED regime plays a key role. 
In this section, we restrict to explaining how the fundamental QED properties can be extended to a tractable class of loss networks.}

A {\em loss network} is an extension of the Erlang B model, and is especially relevant for the analysis of communication networks.
Consider a network with $J$ links, and suppose that link $j$, $j=1,\ldots,J$, comprises $C_j$ circuits (servers). There are $R$ classes of calls called routes. A call on route $r$ uses $A_{jr}$ circuits from link $j$, where we take $A_{jr}$ to be either 0 or 1. Calls of route $r$ arrive according to a Poisson arrival process of rate $\lambda_r$ and a call is blocked if the appropriate servers are not available.
Assuming unit exponential services on each route, it can be shown that the invariant distribution $\pi (n)$ can be written as a ratio of two Poisson probabilities. Specifically, let $N$ be an $R$-dimensional vector of independent Poisson random variables where the rate of $N_r$, $r\in R$, equals $\lambda_r$. 
Now \begin{equation}
\pi(n) = \frac{P(N = n) }{\P(AN \leq C) } I(An \leq C),
\end{equation}
with $C=(C_1,...,C_J)$
Unfortunately the computation of the normalizing constant $\P(AN \leq C)$ is nontrivial for large systems.
It is possible to develop a Gaussian approximation using a central limit approach which can be seen as an extension of our efficiency hypothesis (\ref{eq:efficiency}).
To have all links in the network critically loaded, one considers the case where $\lambda_r = \lambda \nu_r$ and replaces $C$ with $\lambda C + \beta \sqrt{\lambda}$, with $\lambda$ a scaling parameter, as before and
$\beta$ a vector. It is possible to show that all links in the network are critically loaded in this case if $A\nu=C$. For cases where only a subset of links in the network is critically loaded, one must proceed in a much
more delicate manner; see  \cite{HuntKelly1989}.

%

The normalizing constant can, under our scaling hypothesis, be written as $\P((AN -\lambda C) \leq \sqrt{\lambda}\beta )$, which converges to a multivariate Gaussian distribution, as $A\lambda \nu=\lambda C$.
The analogue of (\ref{eq:QoS}) can be seen as the fact that each user has
a dedicated group of service once it is admitted, and the probability of blocking decays at a rate $O(1/\sqrt{\lambda})$, which is analogous to the blocking rate in a simple Erlang B system, relating to (\ref{eq:balance}).
For more details on these properties and more background we refer to \cite{Kelly1991}, which is still a valuable source of information, and the more recent \cite{KellyYudinova2014}. For recent progress on computational procedures we refer to \cite{IBMpaperSigmetrics,IBMpaperquesta}.

Other network extensions of QED principles have been established, though it is typically hard to derive explicit results for the associated limiting distributions and/or processes. For work on fork-join networks in the QED regime, we refer to \cite{Lu2015,Pang2016,Lu2017}, while bandwidth sharing networks in the QED regime have been investigated in \cite{ReedZwart2014}.
For network analogues of the two-fold scaling rule presented in Section 6.2 (in particular Equation  \eqref{eq:intro_twofold_scaling_rule}), see \cite{Khudyakov2006,YomTov2010,Tan2012}.

\chp{
 Apart from the already vast literature on resource sharing systems in the QED regime, much ongoing work is done regarding generalizations of assumptions and models. Examples of these include dependency structures between service times~\cite{Pang2012,Li2014} and state-dependent service times~\cite{Weerasinghe2014,Biswas2013}. We next discuss a couple of promising research lines in more detail.
}

\subsection{Parameter uncertainty} 

Models describing multi-server systems typically assume perfect knowledge on the model primitives, including the mean demand per time period.
For large-scale  systems, the dominant assumption in the literature is that demand arrives according to a non-homogeneous Poisson process, just as in Section \ref{sec:dim}, which translates to the assumption that arrival rates are known for each basic time period (second, hour or day).
In practice, however, estimates for mean demand typically rely on historical data, and are therefore subject to uncertainty.
This parameter uncertainty is likely to affect the effectiveness of capacity sizing rules.
{
Examples of studies in staffing or resource allocation rules under parameter uncertainty are \cite{maman,koolejongbloed,Gurvich2010,Bassamboo2010,Whitt2006}; see also the references therein.}

As an illustration, consider a resource allocation problem with Poisson $\lambda$ arrivals and exponential ($\mu$) servers. Suppose that $\mu=1$, and $\lambda$ is unknown.
For instructive purposes, we make a resource allocation decision $s$ based on the infinite server approximation $\P(\Pois(\l) > \sl)\leq \varepsilon$. In case $\lambda$ is known and large, the choice $\sl=\lambda + \beta \sqrt{\lambda}$, with $\beta = 1-\Phi^{-1}(\varepsilon)$, would be natural, see \eqref{appis}.
If $\lambda$ is not known, but needs to be estimated from data, it is instructive to see how the choice of $s$ is affected. Suppose we have an estimator $\hat\lambda$ of $\lambda$ which is approximate normally distributed with standard deviation $\sigma$. When would it be appropriate to simply take $s=\hat\lambda + \beta \sqrt{\hat\lambda}$?
To obtain some insight, we use the approximation $\Pois(\l) \sim \lambda +G\sqrt{\lambda}$ and assume $\hat{\l} = \l + G_0\sigma$, where $G$ and $G_0$ are independent standard normal variables.
Then we see the following: if $\lambda$ is large, we need to pick $s$ such that $P(\hat\lambda + \sigma G_0+ G\sqrt{\hat \lambda} > s) = \varepsilon$, yielding $s = \hat \lambda + \beta \sqrt{ \sigma^2 + \hat \lambda}$. If $\sigma^2$ is of the order $\hat \lambda$, it follows that the naive rule $s=\hat \lambda + \beta\sqrt{ \hat \lambda}$ leads to poor system performance.
It would be valuable to develop similar quantitative insights for more realistic models.

A related yet fundamental difficulty arises when fluctuations in demand are larger than anticipated by the Poisson assumption. In this case, the Poisson assumption is wrong, and standard
QED rules need to be modified, even when there is an infinite amount of data.
Indeed, although natural and convenient from a mathematical viewpoint, the Poisson assumption often fails to be confirmed in practice.
A deterministic arrival rate implies that the demand over any given period is a Poisson random variable, whose variance equals its expectation.
A growing number of empirical studies of service systems shows that the variance of demand typically exceeds the mean significantly, see \cite{Avramidis:2004,Bassamboo2010,Bassamboo2009,Brown2005,Chen2001,Gans2003,Gurvich2010,koolejongbloed,kimwhitt,maman,Mehrotra2010,Robbins2010,Steckley2009,Zan2012}. The feature that variability is higher than one expects from the Poisson assumption is referred to as \textit{overdispersion}.

Due to its inherent connection with the CLT, the square-root rule relies heavily on the premise that the variance of the number of jobs entering the system over a period of time is of the same order as the mean.
Subsequently, when stochastic models do not take into account overdispersion, resulting performance estimates are likely to be overoptimistic. The system then ends up being under-provisioned, which possibly causes severe performance problems, particularly in critical loading.
To deal with overdispersion, existing capacity sizing rules like the square-root rule need to be modified in order to incorporate a correct hedge against (increased) variability.
Following our findings in Section \ref{sec:properties}, the following adapted capacity allocation rule may be proposed
\begin{equation}
s = \mu_A + \beta \sigma_A,
\end{equation}
where $\mu_A$ and $\sigma_A$ are the mean and standard deviation of demand per period, respecively, and $\beta>0$.
This is similar to \eqref{beta} in which the original variability hedge is replaced by an amount that is proportional to the square-root of the variance of the arrival process.
In \cite{Mathijsen2016}, it is shown that this rule indeed leads to QED-type behavior in bulk-service queues as the system size grows.
For the $M/M/s$ queue, this has been studied in \cite{maman} and \cite{Bassamboo2010}, but more work in this area seems necessary.

\subsection{Load balancing}

The analysis and design of load balancing schemes has attracted strong renewed interest in the last several years, mainly motivated by significant challenges involved in assigning tasks (e.g.~file transfers, compute jobs, data base look-ups) to servers in large-scale data centers. Load balancing schemes provide an effective mechanism for improving QoS experienced by users while achieving high resource utilization levels, goals that are perfectly aligned with the QED regime. A distinguishing feature of such systems, however, is that there is no centralized queue, so that an incoming job should be forwarded instantaneously from the dispatcher to one of the servers. To achieve QED optimality, communication is needed between the dispatcher and servers.
This can cause prohibitive communication burden in large-scale deployments, and asks for assessing load balancing schemes in terms of trade-offs between  performance and implementation overhead.

A naive example of a load balancing scheme is Round Robin, a cyclic scheme that requires no communication, under  which
every $s$-th job is assigned to the same server. For Poisson arrivals and service requirements equal to a constant, Round Robin achieves `perfect load balancing' among servers and the delay distribution is the same as that of a single server serving every $s$th arrival of a Poisson input, or rather, Erlang input. In that case the delay distribution can be approximated by a Gaussian random walk, and all three structural properties are still justified. If deterministic job sizes are being replaced with general job sizes, the system still operates in heavy traffic, and the probability of delay converges to a value in the interval $(0,1)$, but the mean delay will no longer be of the order $O(1/\sqrt{\lambda})$ but constant, so that the third structural QED property no longer holds.

A  more involved example concerns the Join-the-Shortest-Queue (JSQ) scheme and several of its variations, such as versions where the shortest of $d=d(s)$ randomly chosen queues is selected \cite{Mitzenmacher01,VDK96,BLP12,LM06, LN05, G05, BL12, EG16,GTZ16}. In recent years several new results were discovered for JSQ($d(s)$) multi-server systems that operate in the QED regime $(s - \lambda(s)) / \sqrt{s} \to \beta > 0$ as $s \to \infty$.
Eschenfeldt and Gamarnik~\cite{EG15} considered the JSQ scheme with
 $d(s)=s$ and introduced a properly centered and scaled version of the system occupancy processes. They showed that, as $s\to \infty$, the sequence of processes converges weakly to a system of coupled stochastic differential equations.
%
Although this scaling limit differs from the diffusion limit obtained for the fully pooled $M/M/s$ queue, it shares similar favorable QED properties such as vanishing delay. The downside, however, is that a nominal implementation of JSQ comes with a large communication overhead.
It was recently shown  that for $d(s)$ such that $d(s)/(\sqrt{s} \log(s))\to\infty$ as $s\to\infty$ the diffusion limit of JSQ($d(s)$) corresponds to that for the JSQ policy \cite{MBLW16-3,MBLW2016-5}. This indicates that the overhead of the JSQ policy can `almost' be reduced to O($\sqrt{s}\log s$) while retaining diffusion-level optimality.
Many exciting problems in this area, which is still in its infancy, are still open; particular examples are scaling laws (in $s$) of the amount of memory used by the dispatcher, and the amount of communication overhead per packet. See \cite{van2017scalable} for a dedicated survey on this topic.

\subsection{General interarrival and service times}\label{sub:general}
Now consider the $G/G/s$ queue, the natural extension of the $M/M/s$ queue to generally distributed interarrival times and service times. Establishing QED limits for the $G/G/s$ queue has led to a remarkable research effort of which the majority took place over the last decade. When one moves beyond the exponential and deterministic assumptions, establishing QED behavior becomes mathematically more challenging and most of the analysis has the $G/G/s$ queue in the QED regime has evolved around the characterization of the stochastic-process limit of the centered and scaled process, under various assumptions on the model primitives. We restrict our discussion to developments on the basic $G/G/s$ queue; a more extensive discussion, including work on abandonments, can be found in the surveys \cite{Pang2007,Dai2012}. Puhalskii \& Reiman \cite{Puhalskii2000} analyzed the multi-class queue with phase-type service times in the QED regime. Heavy-traffic limits for queues in which service time distributions are lattice-based and/or have finite support were studied by Mandelbaum \& Mom\v{c}ilovi\'c \cite{Mandelbaum2008} and Gamarnik \& Mom\v{c}ilovi\'c  \cite{Gamarnik2008}. The most general class of distributions was considered by Reed \cite{Reed2009} and Puhalskii \& Reed \cite{Puhalskii2010}, who imposed no assumptions on the service time distribution except for the existence of the first moment.
Both of these papers focus on the queue length process. The paper by Reed~\cite{Reed2009} utilized an ingenious connection with the infinite server queue, a connection which was developed further by Puhalskii \& Reed~\cite{Puhalskii2010} where results from modern empirical process theory (including the usage of outer measures to avoid measurability problems) are used in their full potential. Equally important steps forward concern the usage of measure-valued processes by Kang, Kaspi \& Ramanan \cite{Kaspi2011,Kang2012,Kaspi2013}. Assuming minor additional regularity conditions on the service-time distribution (like a bounded density, and sufficiently many finite moments) the paper \cite{Kaspi2013}  unraveled the structure of the limit process that appears after scaling. A key insight from these works is that the limiting queue length process can be interpreted as a one-dimensional diffusion with a drift that depends on the entire history of the process. This as opposed to the Halfin-Whitt diffusion that comes with exponential service times, where the drift depends on the current scaled queue length only.
As a result, even after taking the limit, the resulting limit process for the $G/G/s$ queue has still a complicated steady-state distribution and it is therefore not surprising that considerably less is known for the corresponding steady-state distribution of the $G/G/s$ queue in the QED regime. An exception is the work by Gamarnik \& Goldberg \cite{Goldberg,Gamarnik2013a}, who performed their analysis under the mild assumption that the service time distribution has finite $(2+\varepsilon)$ moments and revealed suitable analogues of all three structural properties mentioned at the beginning of this section, and in addition, explicit tail bounds for the distribution of the delay have been developed.
{Without aiming to be exhaustive, we also point out the recent preprints focusing on  infinite
second moments \cite{GoldbergLi2017ht} and mean waiting times \cite{GoldbergLi2017}. This tutorial is not focused on heuristic approximations
(for example, based on infinite-server models). A recent paper in this direction is \cite{LiuWhitt2016}.}

Finally, we note that the impact of heavy tails is somewhat different than one might expect: the assumption of a service-time distribution with infinite variance is mainly made for technical purposes
(as seen by the generality of the framework in \cite{Reed2009, Puhalskii2010}), while an extension to interarrival times with infinite variance is changing the nature of the scaling procedure itself. For example, to achieve
the balance property (\ref{eq:balance}) in the $G/M/s$ queue where interarrival times have a power law tail with index $\alpha \in (1,2)$, the proper dimensioning rule is
$s_\lambda = \lambda + \beta \lambda^{(\alpha-1)^{-1}}$, see \cite{Pang2007, PangWhitt2009, ReedZwart2011}.

\section*{Acknowledgments}
The authors are grateful to Gordon Pang and two referees for valuable feedback. The research of BM was supported by NWO Free Competition grant no. 613.001.213. The research of JvL is supported NWO Gravitation Networks grant no. 024.002.003. The research of BZ is supported by NWO VICI grant no. 639.033.413.

\bibliography{bibliography,bibliography stochastic service systems,bibliography load balancing}

\end{document}